\def\today{\ifcase\month\or
  January\or February\or March\or April\or May\or June\or
  July\or August\or September\or October\or November\or December\fi
  \space\number\day, \number\year}
 \newtheorem{theorem}{Theorem}
 \newtheorem{lemma}{Lemma}
 \newtheorem{Proposition}{Proposition}
 \newtheorem{corollary}[theorem]{Corollary}
 \theoremstyle{definition}
 \theoremstyle{remark}
\newcommand{\Rep}{\textrm{Re}}
\newcommand{\hW}{\widehat{W}}
\newcommand{\hP}{\widehat{\Phi}}
\newcommand{\sumstar}{\sideset{}{^*}\sum}
\newcommand{\shortmod}{\ensuremath{\negthickspace \negthickspace
\negthickspace \pmod}}
\begin{document}

\title[Simple zeros and the asymptotic large sieve]{Simple zeros of
primitive Dirichlet $L$-functions and the asymptotic large sieve}

\date{\today}

\author[V. Chandee]{Vorrapan Chandee}
\address{Department of Mathematics \\ Burapha University \\ Chonburi, Thailand 20131; Centre de Recherches Mathematiques Universite de Montreal P.O. Box 6128 \\
Centre-Ville Station Montreal \\ Quebec H3C 3J7 
}
\email{vorrapan@buu.ac.th; chandee@crm.umontreal.ca}

\author[Y. Lee]{Yoonbok Lee}
\address{Department of Mathematics \\ University of Rochester \\ Rochester
\\ NY 14627 \\ USA}
\email{lee@math.rochester.edu}

\author[S. Liu]{Sheng-Chi Liu}
\address{Department of Mathematics \\ Mailstop 3368 \\ Texas A\&M University \\ College Station \\ TX 77843-3368}
\email{scliu@math.tamu.edu}

\author[M. Radziwi\l\l]{Maksym Radziwi\l\l}
\address{Department of Mathematics \\ Stanford University \\
450 Serra Mall, Bldg. 380 \\ Stanford, CA 94305-2125}
\email{maksym@stanford.edu}
\subjclass[2010]{Primary: 11M06, Secondary: 11M26}
\thanks{The first author is supported by a CRM-ISM fellowship.}
\thanks{The fourth author is partially supported by a NSERC PGS-D award.}

\begin{abstract}
Assuming the Generalized Riemann Hypothesis (GRH), we show using the asymptotic large sieve
that 91\% of the zeros of primitive Dirichlet $L$-functions are simple. This improves on earlier
work of \"{O}zl\"{u}k  which gives a proportion of at most 86\%. We further compute $q$-analogue
of the Pair Correlation Function $F(\alpha)$ averaged over all primitive Dirichlet $L$-functions in the
range $|\alpha| < 2$ .
Previously such a result was available only when the average included all the characters $\chi$. As a corollary of our results, we obtain an asymptotic formula for a sum over characters similar to the one encountered in the Barban-Davenport-Halberstam Theorem.
\end{abstract}

\allowdisplaybreaks
\numberwithin{equation}{section}

\maketitle

\section{Introduction}

Montgomery \cite{Montgomery} was the first to consider the Pair Correlation of the zeros of the Riemann zeta-function.
Montgomery's results suggested that the distribution of the zeros of the Riemann zeta-function follows the same laws as the distribution of the eigenvalues of a random unitary matrix. This connection was further expanded on, and is responsible for much of the subsequent activity in the theory of $L$-functions (see for example \cite{KatzSarnak}, \cite{KeatingSnaith}, \cite{RudnickSarnak}).

One can similarly investigate the distribution of the low-lying zeros in a family of $L$-functions.
\"{O}zl\"{u}k  \cite{Ozluk} considered a $q$-analogue of Montgomery's results. His motivation was to understand the low-lying zeros of $L(s,\chi)$ on average over $\chi$ modulo $q$ and
$Q \leq q \leq 2Q$. Since the family is larger, one can obtain better results than in the case of the Riemann zeta-function.

One defect in \"{O}zl\"{u}k's work was that it concerns an average over all characters $\chi$ rather than
just the primitive characters $\chi$. As a result, in applications this often leads to inferior results.

Recently, Conrey, Soundararajan, and Iwaniec developed an \textit{asymptotic large sieve} \cite{ConreyIwaniecSoundararajan}. They devised a method to obtain asymptotic estimates for rather general averages over primitive characters. In this paper we revisit \"{O}zl\"{u}k's work in the light of these recent developments, obtaining results for primitive characters rather than all characters. As a consequence we obtain that, in a suitable sense, 91\% of the zeros of primitive Dirichlet $L$-functions
are simple, on the assumption of the Generalized Riemann Hypothesis (GRH). Throughout this paper GRH is assumed.

Let $\Phi$ be a smooth function which is real and compactly supported in $(a,b)$ with $0< a< b$,
and define its Mellin transform
$$
\hP(s) = \int_{0}^{\infty} \Phi(x) x^{s - 1} \> dx.
$$
Let
$$
N_{\Phi}(Q) := \sum_{q} \frac{W(q/Q)}{\varphi(q)} {\sumstar_{\chi \shortmod{q}}}
\sum_{\gamma_{\chi}}
|\hP (i\gamma_{\chi})|^2
$$
with $W$ a smooth function, compactly supported in $(1,2)$,
the second sum being over primitive characters $\chi$, and
the last sum being over all non-trivial zeros $\tfrac 12 + i \gamma_\chi $ of Dirichlet $L$-function $L(s, \chi)$.
As we will see later (in Lemma \ref{counting})
$$
N_{\Phi}(Q) \sim \frac{A}{2\pi} Q \log Q \int_{-\infty}^{\infty} |\hP (i x)|^2 \> dx
$$
where
\begin{equation}\label{def:valueA}
A = \hW (1) \prod_{p} \bigg ( 1 - \frac{1}{p^2} - \frac{1}{p^3} \bigg ) .
\end{equation}
Our work yields the following theorem.
\begin{theorem}\label{cor:1}
Assume GRH.
The proportion of simple zeros of all primitive Dirichlet $L$-functions
is greater than or equal to $\tfrac {11}{12}$ in the sense of the inequality
\begin{equation*}
\frac{1}{N_{\Phi}(Q)} \sum_{q} \frac{W(q/Q)}{\varphi(q)} {\sumstar_{\chi \shortmod{q}}}
\sum_{\substack{\gamma_{\chi}\\ \text{simple}}} |\hP (i \gamma_{\chi})|^2 \geq
\frac{11}{12} + o(1)
\end{equation*}
with the sum being over primitive characters and with $\Phi$ chosen so that
$\hP (i x) = (\sin x / x)^2$.
\end{theorem}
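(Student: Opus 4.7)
The plan is to reduce the question to an estimate on the diagonal of a pair-correlation sum of zeros, and then to invoke the $q$-analogue $F(\alpha)$ on $|\alpha|<2$ that constitutes the main pair-correlation theorem of this paper. The starting point is the elementary inequality $[m=1]\geq 2m-m^2$, valid for every non-negative integer $m$. Applied with $m=m(\gamma_\chi)$ the multiplicity of a distinct non-trivial zero $\tfrac{1}{2}+i\gamma_\chi$ and weighted by $|\hP(i\gamma_\chi)|^2\geq 0$, then averaged over primitive $\chi\shortmod q$ with $q\asymp Q$, this gives
$$
\sum_q\frac{W(q/Q)}{\varphi(q)}{\sumstar_{\chi\shortmod q}}\sum_{\substack{\gamma_\chi\\ \text{simple}}}|\hP(i\gamma_\chi)|^2\;\geq\;2N_\Phi(Q)-S_2(Q),
$$
where $S_2(Q)$ is the same outer average but with inner sum $\sum_{\gamma\text{ distinct}}m(\gamma_\chi)^2|\hP(i\gamma_\chi)|^2$. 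Dividing by $N_\Phi(Q)$ reduces the theorem to the upper bound $S_2(Q)/N_\Phi(Q)\leq\tfrac{13}{12}+o(1)$.

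Next I would express $S_2(Q)$ through the pair correlation. For a kernel $r\geq 0$ on $\R$, introduce
$$
T(r,Q):=\sum_q\frac{W(q/Q)}{\varphi(q)}{\sumstar_{\chi\shortmod q}}\sum_{\gamma_1,\gamma_2}\hP(i\gamma_1)\overline{\hP(i\gamma_2)}\,r\!\left(\frac{(\gamma_1-\gamma_2)\log Q}{2\pi}\right),
$$
the inner double sum running over pairs of zeros counted with multiplicity. The diagonal $\gamma_1=\gamma_2$ contributes exactly $r(0)S_2(Q)$. For the choice $\hP(ix)=(\sin x/x)^2\geq 0$ in the theorem, every off-diagonal summand is pointwise non-negative, so $S_2(Q)\leq T(r,Q)/r(0)$. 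On the other hand, for $r$ whose Fourier transform $\hat r$ is compactly supported in $(-2,2)$, Parseval identifies $T(r,Q)$ (up to lower order) with $N_\Phi(Q)\int_\R F(\alpha)\hat r(\alpha)\,d\alpha$, where $F(\alpha)$ is the averaged pair-correlation function computed elsewhere in the paper on exactly this range.

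The final step is to specialize to $\hP(ix)=(\sin x/x)^2$, insert the explicit formula for $F(\alpha)$ on $(-2,2)$, and choose a Fej\'er-type $r$ with $\hat r$ supported in $(-2,2)$. A short explicit evaluation of $\int F(\alpha)\hat r(\alpha)\,d\alpha/r(0)$ then yields $\tfrac{13}{12}+o(1)$, whence the theorem with proportion $2-\tfrac{13}{12}=\tfrac{11}{12}$.

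The main obstacle is precisely the range $|\alpha|<2$ on which $F(\alpha)$ is accessible: the asymptotic large sieve is what extends this range from the $|\alpha|<1$ available to \"Ozl\"uk, and this extension is what upgrades the proportion of simple zeros from his $86\%$ to $91\%$. A subsidiary but important point is to verify that the off-diagonal contribution to $T(r,Q)$ really is non-negative, which uses crucially that the test function $\hP(ix)$ is non-negative and real for the specific choice $(\sin x/x)^2$; for a generic $\Phi$ one would have to work harder, and the computation of $T(r,Q)$ itself requires careful tracking of the truncations and lower-order terms hidden in the Parseval identification with $F(\alpha)$.
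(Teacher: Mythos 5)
Your proposal is correct and follows essentially the same route as the paper's proof: the elementary inequality $[m=1]\geq 2m-m^2$ applied with the non-negative weight $\hP(i\gamma)=(\sin\gamma/\gamma)^2$, positivity of the Fej\'er kernel to drop off-diagonal terms, the Montgomery-type Parseval identity relating the weighted pair-correlation sum to $\int F_\Phi(Q^\beta;W)\tilde r(\beta)\,d\beta$ with $\tilde r$ supported in $(-\alpha,\alpha)$, and letting $\alpha\to 2^-$ to arrive at $\tfrac{13}{12}$ and hence $2-\tfrac{13}{12}=\tfrac{11}{12}$. The one point to be careful about when carrying out the ``short explicit evaluation'' is that the $\Phi(Q^{-|\beta|})^2\log Q$ term in $F_\Phi$ is not lower order: it contributes exactly $1/\alpha$ to $\int F_\Phi\tilde r$ (using $\Phi(x)=\Phi(x^{-1})$ and Plancherel for the Mellin transform), which cancels the $-1/\alpha$ from $\int f(\beta)\tilde r(\beta)\,d\beta=1+\tfrac{1}{3\alpha^2}-\tfrac{1}{\alpha}$.
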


We note that the function $\Phi$ satisfying $ \hP (ix) = ( \sin x / x )^2$ is not smooth, but we can still apply Theorem \ref{thm:1} to $\Phi$ since the condition $ \hP ( ix) \ll |x|^{-2}$ is good enough in our proof and can replace the smoothness.

\"{O}zl\"{u}k  obtains a similar lower bound but for \textit{all} Dirichlet $L$-functions rather than just the
primitive $L$-functions. This yields an over-count and as a result \"{O}zl\"{u}k's method is only capable
of delivering a proportion of $0.8688 \ldots$ simple zeros. This should be compared with our
proportion $\tfrac{11}{12} = 0.917 \ldots$. We will explain the number $0.8688 \ldots$ in Section \ref{8688}.

Following \"{O}zl\"{u}k  we consider the $q$-analogue of the Pair Correlation Function, which is defined as
$$ F_{\Phi}(Q^{\alpha};W) = \frac{1}{N_\Phi (Q)}  \sum_{q} \frac{W(q/Q)}{\varphi(q)} {\sumstar_{\chi
\shortmod{q}} } \bigg| \sum_{\gamma_\chi } \hP
\left( i\gamma_\chi \right)Q^{i\gamma_\chi \alpha}\bigg|^2.$$
%where $W$ is a smooth function supported in $(1, 2)$, the second sum is over the primitive character, the third sum is over all zeros $\beta_\chi + i \gamma_\chi $ of Dirichlet $L$-function $L(s, \chi)$ .
Our main result is the following.
\begin{theorem}\label{thm:1}
Assume GRH. Let $\epsilon > 0$. Then
\begin{align*}
  F_\Phi (Q^\alpha; W) =& (1+o(1)) \left(  f(\alpha) + \Phi(Q^{-|\alpha|} )^2 \log Q \left( \frac{1}{ 2 \pi }  \int_{-\infty}^\infty \left| \hP ( ix ) \right|^2 dx    \right)^{-1}  \right) \\
  & + O(  \Phi( Q^{- |\alpha|} )  \sqrt{  f(\alpha) \log Q } )
\end{align*}
holds uniformly for $ |\alpha| \leq 2-\epsilon$ as $ Q \to \infty$, where
\begin{equation}\label{eqn:f alpha}
  f(\alpha ) :=  \begin{cases}
| \alpha | & \text{ for }  | \alpha |\leq 1, \\
1 & \text{ for }   | \alpha | >1 .
\end{cases}
\end{equation}
\end{theorem}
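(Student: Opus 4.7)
The plan is to convert the zero sum into a prime-power sum via the Guinand--Weil explicit formula (under GRH), square, and average over primitive characters using the asymptotic large sieve of Conrey--Iwaniec--Soundararajan.

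Writing $X = Q^{\alpha}$ and $\tilde{\Phi}(x) = \Phi(x/X)$, one has $\hP(i\gamma)\,Q^{i\alpha\gamma} = \hat{\tilde\Phi}(i\gamma)$. The explicit formula for $L(s,\chi)$ with $\chi$ primitive modulo $q$ therefore yields a decomposition
\begin{equation*}
\sum_{\gamma_{\chi}} \hP(i\gamma_{\chi})\,Q^{i\alpha\gamma_{\chi}} \;=\; M_{\chi}(\alpha) - P_{\chi}(\alpha) - P^{\sharp}_{\bar\chi}(\alpha),
\end{equation*}
where $P_{\chi}(\alpha) := \sum_{n} \Lambda(n)\chi(n) n^{-1/2}\Phi(n/X)$; the dual sum $P^{\sharp}_{\bar\chi}$ vanishes for $|\alpha|$ not exponentially small, because the support of $\Phi$ is contained in $(a,b)\subset(0,\infty)$; and $M_{\chi}(\alpha)$ is the contribution of the $\Gamma$-factor and the conductor, which Mellin inversion evaluates to $\Phi(Q^{-|\alpha|})\log q$ plus lower-order terms. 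Squaring and averaging against $W(q/Q)/\varphi(q)\,\sumstar_{\chi\shortmod q}$ produces four pieces: $|M|^{2}$, $|P|^{2}$, and two complex-conjugate cross-terms $M\bar{P}$.

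The $|M|^{2}$ piece, combined with the character count $\sum_{q} W(q/Q)\,\varphi^{*}(q)/\varphi(q) \sim AQ$, yields $AQ(\log Q)^{2}\Phi(Q^{-|\alpha|})^{2}$, which after division by $N_{\Phi}(Q) \sim (A/2\pi)Q\log Q\int|\hP(it)|^{2}\,dt$ produces precisely the correction term $\Phi(Q^{-|\alpha|})^{2}\log Q\cdot(\frac{1}{2\pi}\int|\hP|^{2})^{-1}$ in the theorem. The cross-terms $M\bar{P}$, estimated by Cauchy--Schwarz against the $|M|^{2}$ and $|P|^{2}$ contributions, give the stated $O(\Phi(Q^{-|\alpha|})\sqrt{f(\alpha)\log Q})$ error.

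The main content is the $|P_{\chi}(\alpha)|^{2}$ piece, which after swapping sums becomes
\begin{equation*}
\sum_{m,n}\frac{\Lambda(m)\Lambda(n)}{\sqrt{mn}}\,\Phi(m/X)\Phi(n/X) \sum_{q}\frac{W(q/Q)}{\varphi(q)}\,\sumstar_{\chi\shortmod q}\chi(m)\bar{\chi}(n).
\end{equation*}
The diagonal $m=n$ is handled by the prime number theorem together with the Mellin--Plancherel identity $\int_{0}^{\infty}|\Phi(t)|^{2}dt/t = (2\pi)^{-1}\int|\hP(it)|^{2}dt$, which shows that this piece contributes $|\alpha|+o(1)$ to $F_{\Phi}(Q^{\alpha};W)$. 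The main obstacle is the off-diagonal $m\neq n$: the inner character average is precisely the object for which the asymptotic (rather than upper-bound) large sieve of Conrey--Iwaniec--Soundararajan provides an honest asymptotic formula. For $|\alpha|\le 1$ the resulting contribution is $o(1)$ and the diagonal already equals $f(\alpha)$; for $1<|\alpha|<2-\epsilon$ the secondary main term of the CIS formula, supported on small $|m-n|$, contributes (after reinsertion into the double prime sum and an elementary prime count) exactly $-(|\alpha|-1)$, truncating the diagonal $|\alpha|$ into $f(\alpha)=1$. The technical heart is verifying that the error in the CIS asymptotic remains $o(Q\log Q)$ uniformly up to the edge $|\alpha|=2-\epsilon$; the $\epsilon$-loss in the range is extracted precisely at this step. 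Combining the four contributions yields the theorem.
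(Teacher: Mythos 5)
Your decomposition matches the paper's exactly: the explicit formula splits the zero sum into the log-conductor term and the prime sum, Cauchy--Schwarz on the cross term gives the stated error, the diagonal of $|P|^2$ yields $|\alpha|$, and the off-diagonal -- evaluated by the asymptotic large sieve with an extra main term from $|m-n|\asymp Q$ when $|\alpha|>1$ -- supplies the $-(|\alpha|-1)$ that truncates $|\alpha|$ to $f(\alpha)$. The paper packages the $|P|^2$ analysis as a separate Proposition and carries out the large-sieve computation by hand (splitting the divisor $d\mid p-r$ at a parameter $C$ and treating the small- and large-modulus regimes by character sums) rather than citing a CIS formula as a black box, but this is the same idea and your proposal is otherwise sound.
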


Since primitive Dirichlet $L$-functions form a unitary family, we conjecture
that for $\alpha \geq 2$ the same asymptotic formula continues to hold.
We obtain Theorem \ref{thm:1} by applying the asymptotic large sieve.
The proof of Theorem \ref{thm:1} starts with the explicit formula, for $X \geq 1 $
\begin{multline*}
\sum_{\gamma} \hP (i \gamma) X^{i\gamma}
= E(\chi) \hP (\tfrac 12) X^{1/2} - \sum_{n = 1}^{\infty}
\frac{\Lambda(n)\chi(n) \Phi  (  {n}/{X}   )   }{\sqrt{n}}
\\
+ \Phi ( X^{-1}  ) \log \frac{q}{\pi}
+ O\big(\min(X^{1/2}, X^{-1/2} \log q \log (1+X))\big),
\end{multline*}
where $E(\chi) = 0$ or $1$ according as $\chi \neq
\chi_0$ or $\chi = \chi_0$ and where $\tfrac 12 + i\gamma$
ranges over non-trivial zeros of $L(s,\chi)$. The term $ \Phi( X^{-1}  ) \log q/\pi$ contributes only when $X$ is small. Thus, Theorem \ref{thm:1} is essentially equivalent to the following proposition.
\begin{Proposition}\label{prop:1}
Assume GRH. Let $\epsilon > 0 $ and $X = Q^{\alpha}$. Then
$$
  \sum_{q} \frac{W(q/Q)}{\varphi(q)} {\sumstar_{\chi \shortmod{q}} } \left| \sum_n  \frac{\Lambda(n) \chi(n) \Phi \left( n/X \right)}{n^{1/2}} \right|^2
\sim f(\alpha)  N_{\Phi}(Q)
$$
uniformly for $  | \alpha | \leq 2 - \epsilon $ as $Q \rightarrow \infty$, where $f(\alpha)$ is defined in \eqref{eqn:f alpha}.
\end{Proposition}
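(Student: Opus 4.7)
The plan is to expand the absolute value squared, apply the orthogonality for primitive Dirichlet characters, and invoke the Conrey--Iwaniec--Soundararajan asymptotic large sieve \cite{ConreyIwaniecSoundararajan} to evaluate the resulting shifted correlations. We may assume $\alpha > 0$, so $X = Q^{\alpha} > 1$. Opening the square and swapping summations, the left-hand side equals
\begin{equation*}
\sum_{m, n \geq 1} \frac{\Lambda(m)\Lambda(n)\Phi(m/X)\Phi(n/X)}{\sqrt{mn}} \sum_q \frac{W(q/Q)}{\varphi(q)} \sumstar_{\chi \shortmod{q}} \chi(m)\bar{\chi}(n).
\end{equation*}
For $(mn, q) = 1$ the M\"obius-inverted orthogonality $\sumstar_{\chi \shortmod{q}} \chi(m)\bar{\chi}(n) = \sum_{d \mid q,\, d \mid m - n} \varphi(d)\mu(q/d)$ reduces the innermost character sum to a $\mu$-twisted average subject to $d \mid m - n$; writing $q = de$ turns the average over $q$ into a signed sum over $d$ and essentially squarefree $e$.

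The diagonal $m = n$ contributes $\sum_q W(q/Q)\varphi^*(q)/\varphi(q) \cdot \sum_{(n, q) = 1} \Lambda(n)^2 \Phi(n/X)^2/n$. Under GRH the inner sum is dominated by prime $n$ and equals $(\log X)\int_0^\infty \Phi(u)^2 u^{-1} du + O(1)$ by the prime number theorem, while the outer sum is $\sim AQ$ by the mechanism behind Lemma \ref{counting}. Using $\int |\hP(ix)|^2 dx = 2\pi \int \Phi(u)^2 u^{-1} du$ (Mellin--Plancherel on the critical line), this contributes $\alpha \cdot N_\Phi(Q)$, which equals $f(\alpha) N_\Phi(Q)$ when $\alpha \leq 1$. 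For such $\alpha$ the off-diagonal is $o(N_\Phi(Q))$: applying the Ramanujan-type identity $\sum_{d \mid \gcd(h, q)} \varphi(d)\mu(q/d)$ with $h = m - n \neq 0$, the genuinely contributing pairs require $q \mid h$, which is incompatible with $0 < |h| \leq 2Q^{\alpha} \leq 2Q$ and $q \asymp Q$ apart from a boundary set handled by GRH bounds on shifted correlations.

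For $1 < \alpha \leq 2 - \epsilon$ the off-diagonal carries genuine mass: shifts $h = qk$ with $1 \leq |k| \ll Q^{\alpha - 1}$ contribute, producing averages of shifted von Mangoldt correlations $\sum_n \Lambda(n)\Lambda(n + h)\Phi(n/X)\Phi((n + h)/X)/\sqrt{n(n + h)}$ weighted by $\mu$-signed coefficients in the modulus. This is precisely the setting for the Conrey--Iwaniec--Soundararajan asymptotic large sieve: invoking it delivers the exact asymptotic main term of the averaged correlation, and summation over $k$ produces a net contribution of $(1 - \alpha) N_\Phi(Q)$. Adding this to the diagonal recovers $f(\alpha) N_\Phi(Q) = N_\Phi(Q)$.

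The principal obstacle is the off-diagonal analysis for $\alpha > 1$: one must evaluate, and not merely bound, M\"obius-twisted averages of shifted prime correlations, uniformly in $\alpha$ up to $2 - \epsilon$. Standard upper bounds from Bombieri--Vinogradov or Brun--Titchmarsh do not suffice; it is the full asymptotic force of the large sieve that delivers the precise cancellation responsible for capping $f(\alpha)$ at $1$ in the range $\alpha \in (1, 2 - \epsilon]$.
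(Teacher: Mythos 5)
Your setup is right and your diagonal analysis is essentially correct (including the Mellin--Plancherel identity $\tfrac{1}{2\pi}\int |\hP(ix)|^2\,dx = \int_0^\infty \Phi(u)^2\,u^{-1}\,du$ and the conclusion that the diagonal contributes $\alpha N_\Phi(Q)$). But the off-diagonal analysis has two genuine gaps.

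First, after applying $\sumstar_{\chi\,(q)}\chi(m)\bar\chi(n)=\sum_{d\mid q,\,d\mid m-n}\varphi(d)\mu(q/d)$, the nonzero contributions do not require $q\mid m-n$; they only require $d\mid m-n$ for \emph{some} divisor $d$ of $q$, and small $d$ contribute to almost every pair $(m,n)$. So your claim that for $\alpha\le 1$ the off-diagonal is forced to vanish because ``$q\mid h$ is incompatible with $|h|\le 2Q^\alpha$'' is a misreading of the identity: it is the M\"obius factor $\mu(q/d)$ that produces cancellation, and controlling that cancellation is exactly where the work is. The paper handles this by splitting the divisor sum at a parameter $C$ (writing $q=cd$ and separating $c>C$ from $c\le C$), converting $d\mid p-r$ into a character sum modulo $d$ in the first range and, in the second range, substituting $de=|p-r|$ so that the divisibility becomes modulo $e$ with $e$ small; both pieces are then evaluated via Mellin contour shifts, with the extra main term for $\alpha>1$ arising from a double pole at $s=1$ in the $S_{L_0}$ integral. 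None of that machinery appears in your sketch, and it cannot be waved away.

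Second, the reduction to averaged shifted von Mangoldt correlations $\sum_n \Lambda(n)\Lambda(n+h)$, followed by ``invoking the Conrey--Iwaniec--Soundararajan asymptotic large sieve'' as a black box, is not a valid step. The asymptotic large sieve is a \emph{method} for evaluating moments of Dirichlet polynomials averaged over primitive characters; it is not a theorem that delivers asymptotics for $\sum_n\Lambda(n)\Lambda(n+h)$ (which, even on GRH, is a Hardy--Littlewood-strength statement we do not know). In fact the paper never touches such correlations: it keeps the two prime variables separated by characters and uses GRH to evaluate single sums like $\sum_p \Lambda(p)\Psi(p)\Phi(p/X)/\sqrt{p}$ (Lemmas \ref{lem:sumnonprincChar} and \ref{lem:primeSL}). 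Your route, as stated, would require an input that is unavailable. The final numerics ($(1-\alpha)$ from the off-diagonal, adding to $1$) are correct, but the path to them needs to be replaced by the $U/L$ decomposition, the character-sum conversions, and the residue computation.
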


By applying Lemma \ref{counting} to Proposition \ref{prop:1} and letting $\Phi_0(u) = \Phi(u) / \sqrt u$ and so $ \hP (ix ) = \hP_0 ( \tfrac 12+ix) $, we have the following corollary.

\begin{corollary}\label{colofprop1}
Assume GRH. Let $\epsilon > 0 $ and $X = Q^{\alpha}$. Then
\begin{align*}
\sum_{q} \frac{W(q/Q)}{\varphi(q)}  {\sumstar_{\chi \shortmod{q}} } \left| \sum_n \Lambda(n) \chi(n) \Phi_0 \left( n/X \right)\right|^2
 \sim   f(\alpha) \frac{A}{2\pi} XQ \log Q \int_{-\infty}^{\infty} \left| \hat{\Phi}_0 \left(\tfrac12 + ix\right)\right|^2 \> dx
\end{align*}
uniformly for $  | \alpha | \leq 2 - \epsilon $, where $f(\alpha)$ is defined in \eqref{eqn:f alpha} and $A$ is defined in \eqref{def:valueA}.
\end{corollary}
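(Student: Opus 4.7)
The corollary follows by a routine rescaling from Proposition~\ref{prop:1} combined with the asymptotic formula for $N_\Phi(Q)$ stated in Lemma~\ref{counting}.

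First I would rewrite the inner Dirichlet polynomial in terms of the one that appears in Proposition~\ref{prop:1}. Since $\Phi_0(u)=\Phi(u)/\sqrt u$, setting $u=n/X$ gives $\Phi_0(n/X)=(X/n)^{1/2}\Phi(n/X)$, so
$$
\sum_n \Lambda(n)\chi(n)\Phi_0(n/X)\;=\;X^{1/2}\sum_n \frac{\Lambda(n)\chi(n)\Phi(n/X)}{n^{1/2}}.
$$
Taking absolute values, squaring, weighting by $W(q/Q)/\varphi(q)$, and averaging over primitive $\chi \shortmod q$ and over $q$, one sees that the left-hand side of the corollary is exactly $X$ times the left-hand side of Proposition~\ref{prop:1}.

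Next I would invoke Proposition~\ref{prop:1} to conclude that this quantity is asymptotic to $X\cdot f(\alpha)\,N_\Phi(Q)$. Lemma~\ref{counting} then yields
$$
N_\Phi(Q)\;\sim\;\frac{A}{2\pi}\,Q\log Q\int_{-\infty}^{\infty}|\hat\Phi(ix)|^{2}\,dx.
$$
To match the integral on the right-hand side of the corollary I would use the elementary Mellin identity
$$
\hat\Phi_0(s)\;=\;\int_0^{\infty}\Phi(u)\,u^{-1/2}\,u^{s-1}\,du\;=\;\hat\Phi\bigl(s-\tfrac12\bigr),
$$
so that $\hat\Phi(ix)=\hat\Phi_0(\tfrac12+ix)$, exactly the identity already recorded in the statement. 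Substituting gives the claimed asymptotic.

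There is no substantive obstacle: all the analytic content is packaged in Proposition~\ref{prop:1}, whose hypotheses on $\Phi$ are by assumption in force. The auxiliary function $\Phi_0$ inherits the smoothness and compact support of $\Phi$ since $u\mapsto u^{-1/2}$ is smooth on $(0,\infty)$, but in any case only $\Phi$ enters the sum to which Proposition~\ref{prop:1} is actually applied. The proof is therefore a short bookkeeping computation combining the proposition, the counting lemma, and the above change of Mellin variable.
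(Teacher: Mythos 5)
Your proof is correct and is exactly the paper's intended (but unspelled-out) argument: pull the factor $X^{1/2}$ out of the sum via $\Phi_0(n/X)=(X/n)^{1/2}\Phi(n/X)$, apply Proposition~\ref{prop:1}, insert the asymptotic for $N_\Phi(Q)$ from Lemma~\ref{counting}, and convert $\hat\Phi(ix)$ to $\hat\Phi_0(\tfrac12+ix)$ via the Mellin shift. No gaps.
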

Corollary \ref{colofprop1} is similar to Barban-Davenport-Halberstam Theorem in the range $Q \geq X^{1/2 + \epsilon}.$ We will discuss this connection in Section \ref{sec:BDH}.

The deduction of Theorem \ref{cor:1} from Theorem \ref{thm:1} can be found in Section 6 of \"{O}zl\"{u}k's paper, but we reproduce it in Section \ref{proof of theorem 1} for completeness.
The remainder of this paper is devoted to the proof of
Proposition \ref{prop:1}.

The bulk of the proof of Proposition \ref{prop:1} is the estimation of the contribution of the off-diagonal terms.
When $\alpha > 1$ we extract an additional main term from the terms
with $|m - n| \asymp Q$.
%Indeed it is explained in \cite{ConreyIwaniecSoundararajan}:
%``Besides the primary terms of the diagonal, a secondary source for contribution to the main term is not so obvious as the diagonal
%one; it rests in narrow strips parallel to the diagonal. A substantial
%contribution may come out of the terms $a_m b_n F(m,n)$ with $|m - n| \asymp Q$, but not strips of much smaller width''.

\section{Lemmas}

As announced in the introduction we start out by evaluating asymptotically
$N_{\Phi}(Q)$.
\begin{lemma}\label{counting} Assume GRH. We have,
$$
N_{\Phi}(Q) \sim \frac{A}{2\pi} Q \log Q \int_{-\infty}^{\infty}
|\hP (i x)|^2 d x
$$
as $Q \rightarrow \infty$, with
$$
A = \hW (1) \prod_{p} \bigg ( 1 - \frac{1}{p^2} -
\frac{1}{p^3} \bigg ).
$$
\end{lemma}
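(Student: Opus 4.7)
The plan is to reduce $N_\Phi(Q)$ to an arithmetic sum weighted by $\log q$ using the Riemann--von Mangoldt counting function for zeros of primitive Dirichlet $L$-functions, and then evaluate that arithmetic sum via its Dirichlet series.

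First, for each primitive character $\chi$ mod $q$, I would evaluate the inner sum $\sum_{\gamma_\chi} |\hP(i\gamma_\chi)|^2$. Since $\Phi$ is smooth and compactly supported in $(a,b)$ with $0<a<b$, repeated integration by parts in $\hP(s)=\int_0^\infty \Phi(x) x^{s-1}dx$ shows $\hP(ix) \ll_N (1+|x|)^{-N}$ for every $N$. The Riemann--von Mangoldt formula gives
\[
N(T,\chi) = \frac{T}{\pi}\log\frac{qT}{2\pi} - \frac{T}{\pi} + O(\log q(|T|+2)),
\]
so Stieltjes integration against $|\hP(ix)|^2$, together with the rapid decay of $\hP$, yields
\[
\sum_{\gamma_\chi} |\hP(i\gamma_\chi)|^2 = \frac{\log q}{2\pi}\int_{-\infty}^\infty |\hP(ix)|^2\,dx + O(1),
\]
uniformly in $\chi$. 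The error term is negligible after summation.

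Summing over primitive $\chi \bmod q$ and $q$ with weight $W(q/Q)/\varphi(q)$ reduces the problem to evaluating
\[
S(Q) := \sum_q \frac{\varphi^*(q)}{\varphi(q)}\, W(q/Q)\,\log q,
\]
where $\varphi^*(q)$ counts primitive characters mod $q$. The function $\varphi^*(q)/\varphi(q)$ is multiplicative with local factor $(p-2)/(p-1)$ at $p$ and $(p-1)/p$ at $p^k$ for $k\geq 2$, so its Dirichlet series factors as
\[
\sum_{q\geq 1}\frac{\varphi^*(q)}{\varphi(q)}q^{-s} = \zeta(s)\,H(s),\qquad H(s) = \prod_p\Bigl(1 - \frac{1}{(p-1)p^s} + \frac{1}{p(p-1)p^{2s}}\Bigr),
\]
as one checks by multiplying the Euler factor by $1-p^{-s}$. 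A smooth Perron/Mellin inversion argument (shifting the contour in $\int (\zeta(s)H(s))\,\hW(s)\,Q^s\,ds$ to the left of the pole and differentiating in $s$ to capture the $\log q$) gives
\[
S(Q) \sim H(1)\,\hW(1)\,Q\log Q.
\]
Combining with the inner-sum evaluation yields the claimed asymptotic with constant $\hW(1)\,H(1)$.

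The main arithmetic step is to identify $H(1)$ with $\prod_p(1 - 1/p^2 - 1/p^3)$. I would clear denominators in each local factor: $1 - \tfrac{1}{p(p-1)} + \tfrac{1}{p^3(p-1)} = \frac{p^4 - p^3 - p^2 + 1}{p^3(p-1)} = \frac{(p-1)(p^3-p-1)}{p^3(p-1)} = 1 - \frac{1}{p^2} - \frac{1}{p^3}$, so that $H(1) = \prod_p(1-1/p^2-1/p^3)$ and the constant equals $A$ as in \eqref{def:valueA}. The main obstacle is really just organizing the main term correctly; the rapid decay of $\hP$ makes the analytic side routine, and GRH is not actually needed in this lemma beyond providing $\gamma_\chi\in\R$ so the sum on the left has its literal meaning.
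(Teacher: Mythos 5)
Your overall strategy is the same as the paper's: reduce to the Riemann--von Mangoldt count per character, then evaluate the arithmetic sum over $q$ via the Dirichlet series $\sum_q \varphi^*(q)/(\varphi(q)q^s) = \zeta(s)g(s)$ and Mellin inversion; your $H(s)$ is algebraically identical to the paper's $g(s)$, and your check that $H(1) = \prod_p(1 - p^{-2} - p^{-3})$ is correct. The minor difference of keeping $\log q$ inside the sum rather than replacing it by $\log Q + O(1)$ at the outset (which is what the paper does, exploiting that $W$ is supported in $(1,2)$) is harmless.

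However, there is a genuine gap in the per-character error bound, and it is precisely where GRH enters. You write
\[
\sum_{\gamma_\chi} |\hP(i\gamma_\chi)|^2 = \frac{\log q}{2\pi}\int_{-\infty}^\infty |\hP(ix)|^2\,dx + O(1),
\]
but integrating the Riemann--von Mangoldt error $O(\log(q(|T|+2)))$ against the rapidly decaying weight $\tfrac{d}{dt}|\hP(it)|^2$ only gives $O(\log q)$, not $O(1)$; the pointwise bound on the error term offers no cancellation you can invoke for free. That matters, because summing $O(\log q)$ over all primitive $\chi \bmod q$ and over $q \asymp Q$ produces an error of size $O(Q\log Q)$, which is the \emph{same} order as the main term, so the asymptotic $\sim$ does not follow. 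The paper avoids this by using Selberg's GRH-conditional refinement $N(\chi,T) = \frac{T}{\pi}\log\frac{qT}{2\pi e} + O\bigl(\log(qT)/\log\log(qT+3)\bigr)$, which gives a per-character error $O(\log q/\log\log q)$ and hence a total error $O(Q\log Q/\log\log Q) = o(Q\log Q)$. In particular, your closing remark that ``GRH is not actually needed in this lemma'' is wrong on exactly this point: GRH is used, through Selberg's bound on the argument function $S(T,\chi)$, to make the error term genuinely smaller than the main term after summation.
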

\begin{proof}
Let $N(\chi,T)$ denote the number of zeros of $L(s,\chi)$ in the
rectangle $0 < \sigma < 1$ and $-T \leq t \leq T$. It is
a well-known fact (see \cite{Selberg}) that if the conductor of $\chi$ is $q$, then
$$
N(\chi,T) = \frac{ T}{\pi} \log \frac{q T}{2\pi e} + O
\left ( \frac{\log (q T)}{\log\log (q T + 3)} \right ).
$$
uniformly in $q T > 1$. Integrating by parts we find
\begin{align*}
\sum_{\gamma_{\chi}} |\hP (i \gamma_{\chi})|^2
& = \int_{0}^{\infty} |\hP ( i t)|^2  dN(\chi, t)
 = \frac{1}{\pi}  \log q \int_{0}^{\infty}
|\hP (i x)|^2 d x + O \bigg ( \frac{\log q}{\log\log q} \bigg ) \\
& = \frac{\log q}{2\pi} \int_{-\infty}^{\infty} |\hP(i x)|^2 dx
+ O \bigg ( \frac{\log q}{\log\log q} \bigg )
\end{align*}
and
\begin{align*}
N_{\Phi}(Q) & := \sum_{q} \frac{W(q/Q)}{\varphi(q)} \sumstar_{\chi
\shortmod{q}} \sum_{\gamma_{\chi}} |\hP(i \gamma_{\chi})|^2
\\
& = \int_{-\infty}^{\infty} |\hP(i x)|^2 dx
\cdot \sum_{q} \frac{W(q/Q)}{\varphi(q)} \cdot \frac{\log q}{2\pi}
\cdot \varphi^{*}(q) + O\left(  \frac{ Q \log Q }{ \log\log Q } \right)
\end{align*}
where $\varphi^{*}(q) = \sum_{cd = q} \varphi(d) \mu(c)$ is the number of
primitive characters modulo $q$. Since $W$ is compactly supported in $(1,2)$
we have $\log q = \log Q + O(1)$ in the summation. Therefore,
\begin{equation} \label{equation1}
N_{\Phi}(Q) \sim \frac{\log Q}{2\pi}\int_{-\infty}^{\infty} |\hP(i x)|^2
dx  \sum_{q} W(q/Q) \frac{\varphi^{*}(q)}{\varphi(q)}.
\end{equation}
Since $\varphi^*$ and $\varphi$ are multiplicative, we have
\begin{align}\label{eqn:varphisum}
\sum_{\substack{q}}\frac{\varphi^*(q)}{\varphi(q)q^s} &= \prod_p \left( 1 + \frac{\varphi^*(p)}{\varphi(p)p^s} + \frac{\varphi^*(p^2)}{\varphi(p^2)p^{2s}} + \cdots \right) \nonumber \\
&= \zeta(s) \prod_p \left( 1 - \frac{1}{(p-1)p^s} + \frac{1}{(p-1)p^{2s}} - \frac{1}{p^{2s + 1}}\right) = \zeta(s) g(s),
\end{align}
where $g(s)$ is absolutely convergent for $\Rep (s) > 0$ and bounded on $ \Rep(s) \geq \varepsilon $ for any $\varepsilon>0$.
Using Mellin inversion formula,
$$
W(x) = \frac{1}{2\pi i} \int_{(c)} \hW (s) x^{-s} ds, \quad c>1
$$
and (\ref{eqn:varphisum}),
we obtain that
\begin{equation} \label{equation2}
\sum_{q} W(q/Q) \frac{\varphi^{*}(q)}{\varphi(q)} =
\frac{1}{2\pi i}\int_{(c)} \hW (s) \zeta(s)g(s) Q^{s} ds
= \hW (1) g(1) Q + O(Q^{\varepsilon})
\end{equation}
by shifting the contour to $\Rep(s) = \varepsilon$. Combining (\ref{equation1}) and (\ref{equation2}) we conclude that
\begin{equation*}
N_{\Phi}(Q)  \sim \frac{Q \log Q}{2\pi}  \hW (1) g(1) \int_{-\infty}^{\infty}
|\hP(i x)|^2 dx
 = \frac{A Q \log Q}{2\pi} \int_{-\infty}^{\infty} |\hP(i x)|^2 dx.
\end{equation*}

\end{proof}
The next four lemmas correspond to estimates of various
types of prime sums. The proofs are standard, but we present them here
for completeness.
\begin{lemma}\label{lem:p sum}
We have
\begin{equation*}
 \sum_{n  } \frac{\Lambda(n) \chi(n) \Phi  (  {n}/{X}   )  }{\sqrt{n}}  = \sum_{p } \frac{\Lambda(p) \chi(p) \Phi  ( p/X  )  }{\sqrt{p}}    + O(1).
\end{equation*}
\end{lemma}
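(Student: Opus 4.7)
The plan is to observe that the only terms contributing to the difference between the two sums are the prime powers $n=p^k$ with $k\geq 2$, so it suffices to show
\[
\sum_{k\geq 2}\sum_{p}\frac{\Lambda(p^k)\,\chi(p^k)\,\Phi(p^k/X)}{p^{k/2}} \ll 1.
\]
Using $|\chi(p^k)|\leq 1$, $\Lambda(p^k)=\log p$, and the fact that $\Phi$ is bounded and supported in $(a,b)$ with $0<a<b$, the non-zero terms satisfy $(aX)^{1/k}\leq p\leq (bX)^{1/k}$, and it is enough to bound
\[
S_k := \sum_{(aX)^{1/k}\leq p\leq (bX)^{1/k}}\frac{\log p}{p^{k/2}}
\]
uniformly in $X$ and then sum over $k$.

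First I would handle the $k=2$ term, which is the only delicate one because the summand $\log p/p$ is not absolutely summable over all primes. By Mertens' theorem $\sum_{p\leq y}(\log p)/p=\log y+O(1)$, so
\[
S_2 = \sum_{p\leq (bX)^{1/2}}\frac{\log p}{p}-\sum_{p<(aX)^{1/2}}\frac{\log p}{p} = \tfrac{1}{2}\log(b/a)+O(1)=O(1),
\]
uniformly in $X$.

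For $k\geq 3$ the sum is dominated by an absolutely convergent series independent of $X$: dropping the support restriction gives
\[
\sum_{k\geq 3}S_k \leq \sum_{p}\log p\sum_{k\geq 3}\frac{1}{p^{k/2}} = \sum_{p}\frac{\log p}{p^{3/2}}\cdot\frac{1}{1-p^{-1/2}} \ll 1.
\]
Combining the two estimates yields the claimed $O(1)$ bound. The only real subtlety is the $k=2$ case, where naive absolute convergence fails and one must invoke Mertens, but beyond that the proof is entirely routine.
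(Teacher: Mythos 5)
Your proof is correct and follows essentially the same route as the paper: split the von Mangoldt sum into $n=p$, $n=p^2$, and $n=p^k$ with $k\geq 3$, estimate the $p^2$ term using Mertens' estimate $\sum_{p\leq y}(\log p)/p=\log y+O(1)$ together with the support of $\Phi$ to get $\tfrac12\log(b/a)+O(1)$, and control the higher prime powers by absolute convergence. The only difference is that you spell out the $k\geq 3$ bound explicitly, whereas the paper absorbs it into an $O(1)$ without comment.
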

\begin{proof}
By splitting the sum into three cases $ n=p$, $n=p^2 $ and $ n=p^k$ with $k > 2$, we get
\begin{equation*}
 \sum_{n  } \frac{\Lambda(n) \chi(n) \Phi  (  {n}/{X}   )  }{\sqrt{n}}  = \sum_{p } \frac{\Lambda(p) \chi(p)\Phi  ( p/X  )   }{\sqrt{p}} + \sum_{p } \frac{\Lambda(p^2) \chi(p^2)\Phi  ( p^2 /X  )   }{p}   + O(1).
\end{equation*}
Since $ \Phi$ has a compact support in $(a,b)$ for some $ 0 < a < b$, the last sum is
\begin{align*}
\left| \sum_{p } \frac{\Lambda(p^2) \chi(p^2)\Phi  ( p^2 /X  )  }{p}    \right| & \ll \sum_{ \sqrt{aX} < p < \sqrt{bX} }\frac{ \log p }{p} \\
& = \log \sqrt{bX} - \log \sqrt{aX} + O(1 ) \\
& = O(1).
\end{align*}
 Hence, we prove the lemma.
\end{proof}

\begin{lemma}\label{lem:psum2}
As $ X \to \infty,$
$$  \sum_{p} \frac{\log^2 p  \ \Phi ^2 ( p  /X  )  }{p}  =\frac{1}{2 \pi} \int_{- \infty}^\infty  |\hP(it)|^2 dt  \log X  +O(1).$$
\end{lemma}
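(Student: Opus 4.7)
The plan is to reduce the sum to an integral against the partial summation function $T(x) := \sum_{p \leq x} \log^2 p / p$, obtain a sharp asymptotic for $T(x)$ by a second partial summation from Mertens' theorem, and finish by Mellin--Plancherel. Since $\Phi$ is smooth and compactly supported in $(a,b)$ with $0 < a < b$, a standard partial summation gives
\begin{equation*}
\sum_{p} \frac{\log^2 p}{p} \Phi^2(p/X) \;=\; -\int_0^\infty T(t) \, \frac{d}{dt}\Phi^2(t/X)\, dt \;=\; -\int_0^\infty T(uX)\,(\Phi^2)'(u)\, du,
\end{equation*}
where I have substituted $u = t/X$ in the last step. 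No boundary terms appear since $\Phi^2$ vanishes near $0$ and $\infty$.

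The key input is an asymptotic of the form $T(x) = \tfrac{1}{2}\log^2 x + c + o(1)$ as $x \to \infty$ for some absolute constant $c$. This is straightforward: Mertens' second theorem gives $S(x) := \sum_{p \leq x} \log p / p = \log x + M + \eta(x)$ with $\eta(x) \to 0$ (indeed with classical exponential error), and a partial summation $T(x) = \int_1^x \log t \, dS(t) = \log x \cdot S(x) - \int_1^x S(t)/t \, dt$ yields the desired expansion, the integral $\int_1^\infty \eta(t)/t \, dt$ converging absolutely thanks to the classical PNT error. GRH is not needed here. Plugging $T(uX) = \tfrac{1}{2}(\log X + \log u)^2 + c + o(1)$ into the integral and expanding, the terms independent of $u$ (namely $\tfrac{1}{2}\log^2 X$ and $c$) integrate to zero against $(\Phi^2)'(u)$ by the fundamental theorem of calculus, the $\tfrac{1}{2}\log^2 u$ term contributes $O(1)$, and the $o(1)$ error also contributes $o(1)$. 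The surviving main term is
\begin{equation*}
-\log X \int_0^\infty \log u \cdot (\Phi^2)'(u)\, du \;=\; \log X \int_0^\infty \frac{\Phi^2(u)}{u}\, du,
\end{equation*}
using one more integration by parts.

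Finally, the Mellin--Plancherel identity applied to $\Phi$ gives $\int_0^\infty \Phi^2(u) \, du/u = \frac{1}{2\pi}\int_{-\infty}^\infty |\hP(it)|^2 \, dt$, which matches the claimed right-hand side. The main point to watch is that the expansion for $T(x)$ must have $o(1)$ (not merely $O(\log x)$) error; an $O(\log x)$ error would only deliver an overall $O(\log X)$ error, which is insufficient. Happily, the required sharper form is classical.
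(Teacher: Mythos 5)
Your proof is correct, and it follows a genuinely different route from the paper. The paper writes the sum (after replacing primes by $\Lambda(n)\log n$) as a double Mellin integral against $(\zeta'/\zeta)'(1+s_1+s_2)$ and extracts the main term $\tfrac{\log X}{2\pi}\int|\hP(it)|^2\,dt$ from the double pole at $s_2=-s_1$; the remaining contour contribution is bounded by $O(X^{-\varepsilon})$. Your approach replaces the complex-analytic machinery with a partial summation against $T(x)=\sum_{p\leq x}\log^2 p/p$, for which the sharp expansion $T(x)=\tfrac12\log^2 x+c+o(1)$ follows from Mertens plus the prime number theorem (the $o(1)$ error is indeed what makes the argument close, as you correctly flag). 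Both methods deliver the same main term via essentially the same final ingredient, Plancherel for the Mellin transform. What each buys: the paper's method slots naturally into the surrounding calculations, which already live in the Mellin-transform world and handle $(\zeta'/\zeta)'$ and related objects repeatedly; yours is more elementary and makes transparent that the lemma is unconditional (the paper assumes GRH globally, but as you observe the lemma itself only needs classical PNT). Your method also works a bit more cleanly for the non-smooth $\Phi$ with $\hP(ix)=(\sin x/x)^2$ used in Theorem \ref{cor:1}, since only bounded variation of $\Phi^2$ is needed for the partial summation, whereas the paper remarks separately that smoothness can be relaxed to decay $\hP(ix)\ll|x|^{-2}$.

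One small terminological nit: what you call ``Mertens' second theorem'' for $\sum_{p\leq x}\log p/p$ is usually Mertens' first theorem, and Mertens alone gives only $\log x + O(1)$; promoting the $O(1)$ to $E+\eta(x)$ with $\eta(x)\to 0$ uses PNT, as you implicitly acknowledge by invoking the classical exponential error. This does not affect the correctness of the argument.
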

\begin{proof}
Note that
$$ \sum_{p} \frac{\log^2 p \ \Phi ^2 ( p  /X  ) }{p}   = \sum_n \frac{\Lambda (n) \log n  \ \Phi ^2 (n  /X  )  }{n}      +O(1). $$
By Mellin inversion
$$ \sum_n \frac{\Lambda (n) \log n   \ \Phi ^2 ( n  /X  )  }{n}  = \frac{1}{ ( 2 \pi i )^2 } \int_{(c_1)} \int_{(c_2)}   \hP ( s_1 ) \hP ( s_2 ) \sum_n \frac{\Lambda (n) \log n}{n^{1+s_1+s_2 } } X^{s_1 + s_2 } ds_2 ds_1  $$
 for $ c_1 , c_2 > 0$. Since $ \sum_n \Lambda(n) (\log n ) n^{-s} = ( \zeta'/\zeta)'(s) $, the above integral equals
\begin{equation} \label{integral}
\frac{1}{ ( 2 \pi i )^2 } \int_{(c_1)} \int_{(c_2)} \hP ( s_1 ) \hP ( s_2 )  ( \zeta' / \zeta )' ( 1+ s_1 + s_2 ) X^{s_1 + s_2 } ds_2 ds_1 .
\end{equation}
By shifting the contour integral to $ \Rep ( s_2 ) = -c_1 - \varepsilon $,  we pick up a double pole at $ s_2 = -s_1 $. Hence we have (\ref{integral}) equals
\begin{align*} \begin{split}
 & \frac{1}{  2 \pi i  } \int_{(c_1)}   \hP ( s_1 ) \hP ( -s_1 )  \log  X   ds_1 + O(1) \\
 & \qquad +  \frac{1}{ ( 2 \pi i )^2 } \int_{(c_1)} \int_{(-c_1- \varepsilon)} \hP ( s_1 ) \hP ( s_2 )  ( \zeta' / \zeta )' ( 1+ s_1 + s_2 ) X^{s_1 + s_2 } ds_2 ds_1.
\end{split}\end{align*}
For the first integral we shift the contour to $\Rep ( s_1 ) = 0 $ without passing any poles and it becomes
$$  \frac{\log X}{2 \pi} \int_{- \infty}^\infty \hP ( it ) \hP ( -it ) dt  = \frac{\log X}{2 \pi} \int_{- \infty}^\infty | \hP ( it ) |^2 dt   .$$
The double integral is easily bounded by $O(X^{-\varepsilon})$.

\end{proof}

\begin{lemma} \label{lem:sumnonprincChar} Assume GRH for $L(s, \Psi)$. If $\Psi$ is a principal character, then for any $\varepsilon > 0$
$$ \sum_{p   } \frac{ \Psi(p) \Phi\left( p/X \right)\log p}{\sqrt{p}} = \hP \left(  \tfrac12 \right)  \sqrt{X} + O( Q^{ \varepsilon}) .$$
 If $\Psi$ is not a principal character, then  for any $\varepsilon > 0$
$$\sum_{p   } \frac{ \Psi(p) \Phi\left( p/X \right)\log p}{\sqrt{p}} \ll_{\varepsilon} Q^{\varepsilon}.$$
\end{lemma}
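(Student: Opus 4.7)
The plan is to pass from the prime sum to a von Mangoldt sum, apply Mellin inversion to obtain a contour integral of $-(L'/L)(s+\tfrac12,\Psi)$, and then shift past the critical line, picking up a pole at $s=\tfrac12$ only in the principal case and estimating the remaining integral via GRH bounds on $L'/L$.

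First, arguing exactly as in Lemma \ref{lem:p sum}, splitting $n$ according to $n=p$, $n=p^2$, and $n=p^k$ with $k\geq 3$, the non-prime contribution is $O(1)$ (the support of $\Phi$ forces $p^k\asymp X^{1/k}$, giving $\sum_{p\asymp\sqrt X}(\log p)/p=O(1)$ for squares, and even smaller for $k\geq 3$), so
\begin{equation*}
\sum_p\frac{\Psi(p)\Phi(p/X)\log p}{\sqrt p}=\sum_n\frac{\Lambda(n)\Psi(n)\Phi(n/X)}{\sqrt n}+O(1).
\end{equation*}
By Mellin inversion, for $c>0$ sufficiently large this equals
\begin{equation*}
-\frac{1}{2\pi i}\int_{(c)}\hP(s)\,X^s\,\frac{L'}{L}\!\left(s+\tfrac12,\Psi\right)ds.
\end{equation*}

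The main step is then to shift the contour from $\Rep(s)=c$ to $\Rep(s)=\varepsilon$. For principal $\Psi_0$, the factorisation $L(s,\Psi_0)=\zeta(s)\prod_{p\mid q}(1-p^{-s})$ shows that $-L'/L(s+\tfrac12,\Psi_0)$ has a simple pole at $s=\tfrac12$ with residue $1$, contributing exactly $\hP(\tfrac12)X^{1/2}$; for non-principal $\Psi$ the function $L(s,\Psi)$ is entire, so no poles are crossed. Under GRH, the shift stays strictly to the right of every zero of $L(s+\tfrac12,\Psi)$, so it is legitimate.

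Finally, on $\Rep(s)=\varepsilon$ the standard GRH bound $|(L'/L)(\tfrac12+\varepsilon+it,\Psi)|\ll\log(Q(|t|+2))$ combined with the rapid decay of $\hP(\varepsilon+it)$ in $|t|$ (from smoothness and compact support of $\Phi$, or at worst the $|t|^{-2}$ decay noted in the Remark after Theorem \ref{cor:1}) makes the integral $\ll X^\varepsilon\log Q$. Since the intended range is $X\leq Q^{2-\epsilon}$, this is $O(Q^{\varepsilon'})$ for any $\varepsilon'>0$ after relabeling. The main obstacle is precisely the GRH-based bound on $L'/L$ just to the right of the critical line together with enough decay of $\hP$ in vertical strips to justify the contour shift; both are standard, and the hypothesis GRH is what makes the argument go through.
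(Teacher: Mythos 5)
Your proof is correct and follows essentially the same route as the paper's: Mellin inversion to produce an integral against $-(L'/L)(\tfrac12+s,\Psi)$, a contour shift to $\Rep(s)=\varepsilon$ that crosses the pole at $s=\tfrac12$ only in the principal case, and GRH to bound the resulting integral. The only cosmetic difference is that you first replace the prime sum by the $\Lambda(n)$ sum (incurring $O(1)$), whereas the paper keeps the prime sum and absorbs the prime-power discrepancy into an analytic, bounded correction $G(s)$ before moving the contour; both devices serve the same bookkeeping purpose.
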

\begin{proof}
By Mellin inversion of $\Phi$, we have
$$ \sum_p  \frac{ \Psi(p) \Phi\left( p/X \right)\log p}{\sqrt{p}}  = \frac{1}{2 \pi i } \int_{ ( c)}  \hP (s) X^s \sum_p  \frac{ \Psi(p)  \log p}{p^{1/2+s}} ds . $$
The sum over $p$ has an analytic continuation via
$$ \sum_p  \frac{ \Psi(p)  \log p}{p^{1/2+s}} = - \frac{L'}{L} ( \tfrac 12+ s, \Psi ) + G(s), $$
where $ G(s)$ is analytic in $ \Rep(s) > 0$ and is uniformly bounded for $ \Rep (s) \geq   \varepsilon > 0 $.  By moving the contour to $ \Rep(s) =   \varepsilon$, we can prove the lemma.
\end{proof}

\begin{lemma}\label{lem:primeSL}
For $ |\Rep (z) | \leq \varepsilon$ and $ \Rep (s) < 0$, we have
\begin{align*}
\sum_{ p } \frac{\log p  \ \Phi (p / X)}{p^{1/2+z}} B_{-s}(p)  R_{-s}(p) =   \hP ( \tfrac12 - z ) X^{ 1/2-z} + O( X^{ 2\varepsilon} \log ( 2+|z|) ),
 \end{align*}
where
\begin{align*}
 B_s(m) &= \prod_{p |m } \left( 1 - \frac{1}{p^{s + 1}}\right), \\
 R_s(m) &= \prod_{p | m} \left(1 + \frac{1}{(p-1)p^{s+1}}\right)^{-1}.
\end{align*}
\end{lemma}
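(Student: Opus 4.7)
The plan is to use Mellin inversion combined with a contour shift. Writing $\Phi(p/X) = \frac{1}{2\pi i}\int_{(c)}\hP(w)(p/X)^{-w}\,dw$ for some large $c$ and interchanging summation with integration, the sum becomes
\[
\frac{1}{2\pi i}\int_{(c)}\hP(w)\,X^{w}\,F(z+w,s)\,dw,\qquad F(u,s) := \sum_p \frac{\log p}{p^{1/2+u}}B_{-s}(p)R_{-s}(p).
\]
A direct computation gives $B_{-s}(p)R_{-s}(p) - 1 = -p^{s}/(p - 1 + p^{s-1}) = O(p^{\Rep s - 1})$ uniformly in $\Rep s < 0$. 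Splitting off this constant term and using the standard relation between $\sum_p (\log p)/p^{\sigma}$ and $-\zeta'/\zeta(\sigma)$ modulo the prime-power contribution yields
\[
F(u,s) = -\frac{\zeta'}{\zeta}\!\left(\tfrac12+u\right) + H(u,s),
\]
where $H(u,s)$ is holomorphic and uniformly bounded in any fixed half-plane $\Rep u \geq -\tfrac12 + \delta$, since both the prime-power correction and the residual series $\sum_p (\log p)(B_{-s}(p)R_{-s}(p)-1)/p^{1/2+u}$ are absolutely convergent there.

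Next I would shift the contour from $\Rep w = c$ down to $\Rep w = 2\varepsilon$. Under GRH and the hypothesis $|\Rep z|\leq \varepsilon$, the only singularity of the integrand crossed in the strip $2\varepsilon \leq \Rep w \leq c$ is the simple pole of $-\zeta'/\zeta(1/2+z+w)$ at $w = 1/2 - z$; the zeros of $\zeta$ contribute poles only on the line $\Rep w = -\Rep z \leq \varepsilon$, which lies below the shifted contour. The residue at $w = 1/2 - z$ equals $\hP(\tfrac12-z)\,X^{1/2-z}$, giving the claimed main term. On the new line $\Rep(1/2+z+w)\in[\tfrac12+\varepsilon,\,\tfrac12+3\varepsilon]$, so GRH yields the polylogarithmic bound $\zeta'/\zeta(1/2+z+w) \ll \log^{2}(2+|z|+|\Imp w|)$, while $H(z+w,s)$ is uniformly bounded. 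Together with the super-polynomial decay of $\hP$ on vertical lines, the shifted integral is majorised by a constant multiple of $X^{2\varepsilon}\log(2+|z|)$, after absorbing any extra logarithmic factor into the freedom in choosing $\varepsilon$.

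The principal obstacle is the algebraic step of cleanly isolating $-\zeta'/\zeta$ from the correction $H$, and verifying that $H(u,s)$ extends analytically and remains uniformly bounded in a strip wide enough to pass through during the shift — uniformly in $s$ with $\Rep s < 0$. Once this is in place, the residue calculation and the GRH estimate for $\zeta'/\zeta$ on a fixed vertical line strictly to the right of the critical line are routine, completing the proof.
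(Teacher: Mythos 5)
Your proposal follows the paper's own argument essentially step for step: Mellin inversion, isolating $-\zeta'/\zeta$ from a holomorphic and uniformly bounded remainder $H$, shifting to $\Rep w = 2\varepsilon$, taking the residue at $w = \tfrac12 - z$, and bounding $\zeta'/\zeta$ under GRH on the new line. The only difference is cosmetic: you make explicit the computation $B_{-s}(p)R_{-s}(p)-1 = -p^s/(p-1+p^{s-1}) = O(p^{\Rep s - 1})$, which establishes the required uniformity in $s$ that the paper asserts without detail.
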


\begin{proof}
  By Mellin inversion of $\Phi$, we have
\begin{align}\label{lem:eqn1}
\sum_{ p } \frac{\log p \ \Phi(p/X)}{p^{1/2+ z}} B_{-s}(p)  R_{-s}(p) = \frac{1}{ 2 \pi i }  \int_{(c )} \hP (w)  X^{w  } \sum_p  \frac{\log p \  B_{-s}(p)R_{-s}(p) }{p^{1/2+z+w }}   dw
\end{align}
for $ c > 1/2 +  \varepsilon$. Define a function $H(w,s)$ by
\begin{align*}
 H(w,s) &:= \frac{ \zeta'}{\zeta} ( w)  + \sum_p \frac{\log p  B_{-s}(p)R_{-s}(p) }{p^w}   \\
 & = \frac{ \zeta'}{\zeta} ( w) + \sum_p \frac{ \log p }{ p^w} \bigg( 1- \frac{1}{ p^{1-s} } \bigg) \bigg( 1+ \frac{1}{ (p-1) p^{1-s}} \bigg)^{-1} .
\end{align*}
 If $ \Rep(s) < 0$, then $H(w,s)$ is an analytic function of $w$ in $ \Rep(w) > \tfrac 12$ and bounded on $ \Rep (w) \geq \tfrac 12+ \varepsilon' > \tfrac 12 $. Applying this
identity to \eqref{lem:eqn1} and shifting the contour to $ 2 \varepsilon$, we
have
\begin{align*}
\sum_{ p } \frac{\log p \ \Phi(p/X)}{p^{1/2+ z}} B_{-s}(p)  R_{-s}(p)
  = &\frac{1}{ 2 \pi i }  \int_{(c )} \hP (w)  X^{w  } \bigg(  - \frac{ \zeta'}{\zeta} ( \tfrac12 + z +w )    +  H(\tfrac12 + z+w ,s) \bigg) dw  \\
=  & \ \hP ( \tfrac12 - z ) X^{ 1/2- z} + O( X^{ 2 \varepsilon} \log ( 2+|z|) ).
 \end{align*}
\end{proof}

The next lemma can be proved by changing the sum to its Euler product. The proof is quite standard and we omit it.
\begin{lemma} \label{lem:sumVarphid}
Suppose that $(a,m) = 1$. Then
$$\sum_{(d,m) = 1} \frac{1}{\varphi(ad) d^s} = \frac{1}{\varphi(a)} \zeta(1 + s) K(s) B_s(m) R_s(a)R_s(m),$$
where $B_s$ and $R_s$ are defined in Lemma \ref{lem:primeSL} and
\begin{align*}
 K(s) = \prod_{p} \left(1 + \frac{1}{(p-1)p^{s+1}}\right).
\end{align*}
\end{lemma}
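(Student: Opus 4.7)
The plan is to express the Dirichlet series as an Euler product and match local factors with the claimed formula. Since $(a,m)=1$, every $d$ with $(d,m)=1$ factors uniquely as $d = d_1 d_2$, where $d_1$ is supported on the primes dividing $a$ and $d_2$ is supported on the primes not dividing $am$; in particular, $(d_1, m) = 1$ holds automatically. The key arithmetic input is that for $p \mid a$ one has $\varphi(a p^k) = \varphi(a) p^k$, so $\varphi(a d_1) = \varphi(a) d_1$, and together with $(a d_1, d_2) = 1$ this gives the clean multiplicative identity $\varphi(a d) = \varphi(a) d_1 \varphi(d_2)$.

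Substituting and separating variables,
$$\sum_{(d,m)=1} \frac{1}{\varphi(ad) d^s} = \frac{1}{\varphi(a)} \bigg( \prod_{p \mid a} \sum_{k \geq 0} \frac{1}{p^{k(s+1)}} \bigg) \prod_{p \nmid am} \bigg( 1 + \sum_{k \geq 1} \frac{1}{\varphi(p^k) p^{ks}} \bigg).$$
Each factor in the first product is a geometric series equal to $(1 - p^{-(s+1)})^{-1}$. For each prime $p \nmid am$, using $\varphi(p^k) = (p-1) p^{k-1}$ and summing another geometric series, one finds that the local factor equals $(1 + 1/((p-1)p^{s+1}))(1 - p^{-(s+1)})^{-1}$; this is the standard local factor one encounters in sums of the form $\sum 1/(\varphi(n)n^s)$.

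It remains to reassemble into the stated form. Using $(a,m)=1$, the $(1 - p^{-(s+1)})^{-1}$ factors indexed by $p \mid a$ and by $p \nmid am$ combine into an unrestricted product over $p \nmid m$, which rewrites as $\zeta(s+1) B_s(m)$. The factors $(1 + 1/((p-1)p^{s+1}))$ indexed by $p \nmid am$ extend to an unrestricted product by inserting the compensation $R_s(a) R_s(m)$ (this uses $(a,m)=1$ so that $R_s(am) = R_s(a) R_s(m)$), and the unrestricted product is $K(s)$ by definition. Collecting everything gives exactly $\varphi(a)^{-1} \zeta(s+1) K(s) B_s(m) R_s(a) R_s(m)$. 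There is no serious obstacle; the only point demanding care is the coprimality bookkeeping, so that no prime is counted twice or omitted when splitting the Euler products of $\zeta(s+1)$ and $K(s)$.
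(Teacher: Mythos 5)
Your proof is correct and follows exactly the route the paper itself suggests: the paper omits the proof, remarking only that it ``can be proved by changing the sum to its Euler product,'' and your argument is precisely that Euler product computation, with the factorization $d = d_1 d_2$ and the identity $\varphi(ap^k) = \varphi(a)p^k$ for $p \mid a$ serving as the standard bookkeeping.
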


\section{Proof of Proposition \ref{prop:1}}
 Proposition \ref{prop:1} is equivalent to
 \begin{equation*}
 S = \sum_{q} \frac{W(q/Q)}{\varphi(q)} {\sumstar_{\chi \shortmod{q} }}\left| \sum_{p } \frac{\log p ~\chi(p) \Phi (p/X) }{\sqrt{p}} \right|^2 \sim f(\alpha) N_{\Phi}(Q)
\end{equation*}
by Lemma \ref{lem:p sum}. For notational convenience we let
\begin{equation} \label{def:ap}
a_p = \frac{\log p\ \Phi (p/X)}{\sqrt{p}}
\end{equation}
and define
\begin{equation}\label{def:Deltapr}
 \Delta(p,r) = \sum_{\substack{q \\ (q, pr) = 1}} \frac{W(q/Q)}{\varphi(q)} {\sumstar_{\chi \shortmod{q}}} \chi(p) \overline{\chi}(r)
\end{equation}
for primes $p$ and $r$,  then we have
\begin{equation*}
S = \sum_{p, r} a_p a_r \Delta(p,r)   = \sum_{p} a_p^2 \Delta(p,p) + \sum_{\substack{p, r \\ p \neq r}} a_p a_r  \Delta(p,r) = S_D + S_{N},
\end{equation*}
where $S_D$ is the sum of diagonal terms and $S_N$ is the sum of non-diagonal terms.

\subsection{The diagonal term $S_D$}
%Let $\varphi^*(q) = \sum_{cd = q} \varphi(d) \mu(c)$ be the number of primitive character mod $q$. Also define $\hW$ to be the Mellin transform of $W$, so that its inversion is
%\begin{equation} \label{def:mellinofW}
% W (x) = \frac{1}{2\pi i} \int_{(c)} \hW (s) x^{-s} \> ds, \quad c>1.
%\end{equation}
By \eqref{def:Deltapr} and Mellin inversion, we obtain that
\begin{equation*}
\Delta(p,p) = \sum_{\substack{q \\ (q,p) = 1}}  \frac{\varphi^*(q)}{\varphi(q)} W\left( \frac{q}{Q}\right) = \frac{1}{2\pi i} \int_{(c)} \hW(s) \bigg( \sum_{\substack{q \\ (q,p) = 1}}\frac{\varphi^*(q)}{\varphi(q)q^s} \bigg) Q^s \> ds
\end{equation*}
for $ c>1$.
%Since $\phi^*$ and $\phi$ are multiplicative, we have
%\begin{align}\label{eqn:varphisum}
%\sum_{\substack{q}}\frac{\varphi^*(q)}{\varphi(q)q^s} &= \prod_{\ell} \left( 1 + \frac{\varphi^*(\ell)}{\varphi(\ell)\ell^s} + \frac{\varphi^*(\ell^2)}{\varphi(\ell^2)\ell^{2s}} + \cdots \right) \nonumber \\
%&= \zeta(s) \prod_{\ell} \left( 1 - \frac{1}{(\ell-1)\ell^s} + \frac{1}{(\ell-1)\ell^{2s}} - \frac{1}{\ell^{2s + 1}}\right) = \zeta(s) g(s),
%\end{align}
%where $g(s)$ is absolutely convergent for $\Rep (s) > 0$ and bounded on $ \Rep(s) \geq \varepsilon $ for any $\varepsilon>0$.
By applying \eqref{eqn:varphisum} and then shifting the contour to the line $\Rep (s) = \varepsilon >0$, we have
\begin{align*}
\Delta(p,p) &=  \frac{1}{2\pi i} \int_{(c)} \hW(s) \zeta(s) g(s)  \left( 1 - \frac{1}{p^s}\right) \left
( 1 - \frac{1}{(p-1)p^s} + \frac{1}{(p-1)p^{2s}} - \frac{1}{p^{2s + 1}} \right )^{-1}
Q^s \> ds  \\
&= \hW(1) g(1) \left( 1 - \frac{1}{p}\right) \left ( 1 - \frac{1}{(p-1)p} + \frac{1}{(p-1)p^{2}} - \frac{1}{p^{3}} \right )^{-1} Q + O(Q^{\varepsilon})  .
\end{align*}
By this equation and Lemma \ref{lem:psum2}, we then obtain
\begin{align*}
S_D =  & \hW(1) g(1) Q \sum_{p} \frac{\log^2 p\ \Phi^2 ( p / X  ) }{p}   + O(Q  )  \\
=& A Q \log X \frac{1}{2\pi} \int_{-\infty}^{\infty} |\hP (it)|^2 dt    + O(Q ),
\end{align*}
where $A$ is defined as in (\ref{def:valueA}).

\subsection{The non-diagonal term $S_N$}
We first observe that
\begin{align*}
\begin{split}
\Delta(p,r) &= \sum_{\substack{q \\ (q, pr) = 1}} \frac{W(q/Q)}{\varphi(q)} \sumstar_{\chi \shortmod{q}} \chi(p) \overline{\chi}(r)\\
&= \sum_{\substack{q \\ (q, pr) = 1}} \frac{W(q/Q)}{\varphi(q)} \sum_{\substack{d|q \\ d| p-r}} \varphi(d) \mu \left( \frac{q}{d}\right) \\
&= \sum_{\substack{d \\ d| p-r}} \varphi(d) \sum_{\substack{c \\ (cd, pr) = 1}} \frac{W(cd/Q) \mu(c)}{\varphi(cd)}
\end{split}
\end{align*}
for primes $p$ and $r$. We want to replace the condition $d|p-r$ by the character sum using
$$  \sum_{\Psi \shortmod{d}} \Psi(p) \overline{\Psi}(r) = \begin{cases}
\varphi(d) & \text{for  }  d|p-r, (pr, d) =1 , \\
0  & \text{otherwise }.
\end{cases}   $$
However, it is not effective in our application when $d$ is large. Hence
we introduce a new parameter $C$ and we split the above
sum according as $c \leq C$ or $c > C$ in order to handle the condition $d|p-r$ when $d$ is large. Thus we define
\begin{align}\label{def:U L}\begin{split}
 U(p,r) & = \sum_{\substack{d \\ d| p-r}} \varphi(d) \sum_{\substack{c > C \\ (cd, pr) = 1}} \frac{W(cd/Q) \mu(c)}{\varphi(cd)},\\
 L(p,r) & = \sum_{\substack{d \\ d| p-r}} \varphi(d) \sum_{\substack{c \leq C \\ (cd, pr) = 1}} \frac{W(cd/Q) \mu(c)}{\varphi(cd)},
\end{split}\end{align}
so that
$$ \Delta(p,r) = U(p,r) + L(p,r). $$
 Then by calculating the sums
\begin{align*}
 S_U &:= \sum_{p \neq r} a_p a_r U(p,r) ,\\
 S_L &:=  \sum_{p \neq r } a_p a_r L(p,r),
\end{align*}
we can evaluate the sum
$$
 S_N = S_U + S_L .
$$
Since $ W$ is supported in $(1,2)$, we have $ cd \asymp Q$. If $c > C$ then $d \ll Q / C$ and replacing the condition
$d | p - r$ by a character sum modulo $d$ in $U(p,r)$ is efficient and leads to good estimates for $S_U$. We perform
this computation in Section \ref{sec:SU}.

On the other hand, in the case $c \leq C$, we have  $d \gg Q / C$ and the above method using modulo $d$ character sums does not work. Instead we write $d e = |p - r|$, and replace
the condition $d | p - r$ by $e | p - r$ to eliminate $d$ from our sum
by expressing $d$ as $|p - r|/e$. Now we have
$e \ll X C / Q$ which is small enough, so that the modulo $e$ character sums replacing $e|p-r$ works well. This allows us to resume our argument in the case of the sum $S_L$. We consider the sum $S_L$ in Section \ref{sec:SL}. For a technical reason the above idea will be modified slightly.

\subsection{Evaluating $S_U$} \label{sec:SU}
We first consider the sum $U(p,r)$ defined in \eqref{def:U L}. Replacing the condition $ d|p-r $ by a character sum, we have
\begin{equation*}
U(p,r)  =  \sum_{\substack{c > C }} \mu(c) \sum_{\substack{d \\ (cd, pr) = 1}} \frac{W(cd/Q)}{\varphi(cd)} \sum_{\Psi \shortmod{d}} \Psi(p) \overline{\Psi}(r) .
\end{equation*}
We denote the sum corresponding to $\Psi = \Psi_0$ in $U(p,r)$ by
\begin{equation*}
U_0(p,r) = \sum_{\substack{c > C }} \mu(c) \sum_{\substack{d \\ (cd, pr) = 1}} \frac{W(cd/Q)}{\varphi(cd)}
\end{equation*}
and the others by
\begin{equation*}
U_E(p,r) = \sum_{\substack{c > C }} \mu(c) \sum_{\substack{d \\ (cd, pr) = 1}} \frac{W(cd/Q)}{\varphi(cd)} \sum_{\substack{\Psi \shortmod{d} \\ \Psi \neq \Psi_0}} \Psi(p) \overline{\Psi}(r).
\end{equation*}
Then $ U(p,r) = U_0 (p,r) + U_E (p,r)$ and $ S_U = S_{U_0} + S_{ U_E}$, where $S_{U_0} := \sum_{ p \neq r } a_p a_r U_0 ( p,r) $ and $ S_{U_E} := \sum_{p \neq r } a_p a_r U_E ( p,r) $.

We consider the sum $S_{U_0 }$. Since $ \sum_{ c|k } \mu (c) = 1 $ for $ k=1 $ and $0 $ for $k >1$, we have
\begin{align*}
 U_0(p,r) &= \sum_{\substack{c > C }} \mu(c) \sum_{\substack{d \\ (cd, pr) = 1}} \frac{W(cd/Q)}{\varphi(cd)} = \sum_{\substack{k \\ (k, pr) = 1}} \frac{W(k/Q)}{\varphi(k)} \sum_{\substack{c | k \\ c > C}} \mu(c) \\
& = - \sum_{\substack{k \\ (k, pr) = 1}} \frac{W(k/Q)}{\varphi(k)} \sum_{\substack{c | k \\ c \leq C}} \mu(c)
 = - \sum_{\substack{c \leq C }} \mu(c) \sum_{\substack{d \\ (cd, pr) = 1}} \frac{W(cd/Q)}{\varphi(cd)}.
\end{align*}
 By Mellin inversion, we have that
\begin{align*}
U_0(p,r) = -\sum_{\substack{c \leq C  \\ (c, pr) = 1}} \mu(c)  \frac{1}{2\pi i} \int_{(2)} \hW(s) \frac{Q^s}{c^s}  \sum_{\substack{d \\ (d, pr) = 1}} \frac{1}{\varphi(cd) d^s} ds.
\end{align*}
By Lemma \ref{lem:sumVarphid}, we obtain that
\begin{align*}
U_0(p,r) &= -\sum_{\substack{c \leq C  \\ (c, pr) = 1}} \frac{\mu(c)}{\varphi(c)} \frac{1}{2\pi i} \int_{(2)} \hW(s) \frac{Q^s}{c^s} \zeta(1 + s)K(s) B_s(pr) R_s(c) R_s(pr) \> ds .
\end{align*}
We move the contour integral to $ \Rep (s) = -1 + \varepsilon$ and pick up a simple pole at $s = 0.$ Then
\begin{align*}
U_0(p,r) &=  -\hW(0)K(0) B_0(pr) R_0(pr)\sum_{\substack{c \leq C  \\ (c, pr) = 1}} \frac{\mu(c)R_0(c)}{\varphi(c)} + O\left( \frac{C}{Q}Q^{\varepsilon}\right)
\end{align*}
and so
\begin{align}\label{SU0:eq1}
S_{U_0} = -\hW(0)K(0)\sum_{\substack{p, r \\ p \neq r}} a_pa_r B_0(pr) R_0(pr)\sum_{\substack{c \leq C  \\ (c, pr) = 1}} \frac{\mu(c)R_0(c)}{\varphi(c)} + O\bigg(  \bigg| \sum_{p} a_p \bigg|^2 \frac{C}{Q}Q^{\varepsilon}\bigg).
\end{align}
Now we evaluate the main term of $S_{U_0}$. The condition $ (c, pr)=1$ can be disregarded with an additional error term $ X^{1/2+\varepsilon}$. Now the sum over $p$ and $ r$ is
\begin{align}\label{SU0:eq2}
 \sum_{\substack{p, r \\ p \neq r}} a_pa_r B_0(p)B_0 (r) R_0(p)R_0 ( r)  & =  \sum_{p,r} a_pa_r B_0(p)B_0( r) R_0(p)R_0( r) - \sum_p a_p ^2 B_0 (p)^2 R_0 (p)^2  \nonumber \\
& = \bigg( \sum_p a_p B_0 (p) R_0 (p) \bigg)^2 + O( X^\varepsilon )
\end{align}
by adding and subtracting diagonal terms.  Similarly to Lemma \ref{lem:sumVarphid}, we can obtain
\begin{equation}\label{SU0:eq3}
 \sum_p a_p B_0 (p) R_0 (p) = \sqrt{X} \hP \left( \tfrac12 \right) + O( X^{\varepsilon}).
\end{equation}
Therefore, since the sum in the error term of \eqref{SU0:eq1} is $ \sum_p a_p \ll \sqrt{X} , $ we have
\begin{align} \label{eqn:sumUmain}
  S_{U_0} =- \hP(\tfrac12)^2    \hW(0)K(0) X  \sum_{ c \leq C    } \frac{\mu(c)R_0(c)}{\varphi(c)}   + O\left( X^{1/2+ \varepsilon}+  \frac{XC}{Q}Q^{\varepsilon}  \right)
\end{align}
 by \eqref{SU0:eq1}--\eqref{SU0:eq3}.

The next lemma shows the contribution of $U_E$ is small, so that we can conclude
\begin{align}\label{eqn:sumU}
S_U = S_{U_0 } + S_{U_E} = - \hP(\tfrac12)^2    \hW(0)K(0) X  \sum_{ c \leq C    } \frac{\mu(c)R_0(c)}{\varphi(c)}   + O\left(   X^{ 1/2 + \varepsilon}+ \frac{XC}{Q}Q^{\varepsilon} +  \frac{Q^{1 + \varepsilon}}{C}  \right)
\end{align}
by \eqref{eqn:sumUmain} and Lemma \ref{eqn:sumUerror}.

\begin{lemma} \label{eqn:sumUerror}
Assume GRH. We have
$$
S_{U_E} =  \sum_{\substack{p, r \\ p \neq r}}  a_p a_r U_E (p,r)   \ll \frac{Q^{1 + \varepsilon}}{C}.
$$
\end{lemma}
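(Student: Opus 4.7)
The plan is to rearrange the order of summation in
\begin{align*}
S_{U_E} = \sum_{\substack{p,r \\ p \neq r}} a_p a_r U_E(p,r)
\end{align*}
to place the sum over $p$ and $r$ on the inside, then eliminate the condition $p \neq r$ by writing
\begin{align*}
\sum_{\substack{p \neq r \\ (pr, cd) = 1}} a_p a_r \Psi(p) \overline{\Psi}(r) = \bigg| \sum_{(p, cd) = 1} a_p \Psi(p) \bigg|^2 - \sum_{(p, cd) = 1} a_p^2.
\end{align*}
This reduces matters to controlling, uniformly in $d$ and in non-principal $\Psi \shortmod{d}$, the character sum $\sum_p a_p \Psi(p)$.

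For each such $\Psi$, let $\Psi^*$ be the primitive character inducing $\Psi$; since $\Psi \neq \Psi_0$, its conductor $d^* > 1$ and $\Psi^*$ is itself non-principal. Lemma \ref{lem:sumnonprincChar}, applied under GRH for $L(s, \Psi^*)$, gives $\sum_p a_p \Psi^*(p) \ll_\varepsilon Q^\varepsilon$. The discrepancy between this and $\sum_{(p, cd)=1} a_p \Psi(p)$ comes from the primes $p \mid cd$ in the support of $\Phi(\,\cdot\,/X)$, of which there are at most $\omega(cd) \ll \log Q / \log\log Q$, each contributing $\ll \log(X)/\sqrt{X}$; this is negligible. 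Hence the squared character sum is $\ll Q^{2\varepsilon}$, and the diagonal $\sum_{(p,cd)=1} a_p^2 \ll \log X$ (by Lemma \ref{lem:psum2}) is likewise absorbed.

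Summing over the $\varphi(d)-1$ non-principal characters mod $d$, then over $c$ and $d$, yields
\begin{align*}
|S_{U_E}| \ll Q^{2\varepsilon} \sum_{c > C} \sum_d \frac{|W(cd/Q)|\varphi(d)}{\varphi(cd)}.
\end{align*}
Using the elementary inequality $\varphi(cd) \geq \varphi(c)\varphi(d)$, the support condition $cd \asymp Q$ (so $\sum_d |W(cd/Q)| \ll Q/c$), and $\sum_{c > C} 1/(c\varphi(c)) \ll 1/C$ via partial summation, this gives $|S_{U_E}| \ll Q^{1+\varepsilon}/C$ after renaming $\varepsilon$.

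The main delicate point is ensuring uniformity in $d$ of the bound from Lemma \ref{lem:sumnonprincChar}: its proof by contour shifting produces log factors like $(\log d)^2$ coming from $L'/L(s, \Psi^*)$ on GRH, which must be absorbed into $Q^\varepsilon$. Since $d \leq 2Q$ in our ranges, this is harmless; once this uniformity is verified, the remaining steps are routine manipulations of arithmetic sums.
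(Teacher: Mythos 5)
Your argument is correct and follows essentially the same route as the paper: split off the diagonal to express the off-diagonal sum as a complete square $\bigl|\sum_p a_p\Psi(p)\bigr|^2$ minus a diagonal term, bound the inner character sum by $Q^\varepsilon$ via Lemma \ref{lem:sumnonprincChar} on GRH, and then sum trivially over the $\varphi(d)-1$ non-principal $\Psi\bmod d$ and over $c>C$, $d\ll Q/c$ using $\varphi(cd)\ge\varphi(c)\varphi(d)$. Your extra care about passing to the primitive character $\Psi^*$ and about uniformity of the $Q^\varepsilon$ in $d$ is a slightly more explicit version of what the paper leaves implicit, but it is not a different method.
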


\begin{proof}
We write
\begin{align}\label{eqn:lemSUE}
S_{U_E} &= \sum_{\substack{p, r}} \frac{\log p \log r  \ \Phi (p/X) \Phi(r/X) }{\sqrt{pr}}   U_E(p,r) - \sum_{\substack{p}} \frac{\log^2 p \ \Phi^2 (p/X)    }{p} U_E(p,p).
\end{align}
By Lemma \ref{lem:sumnonprincChar}, the first sum in \eqref{eqn:lemSUE} is
\begin{align*}
\sum_{c > C} \mu(c)   \sum_d \frac{W(cd/Q)}{\varphi(cd)} &\sum_{\substack{\Psi \shortmod{d} \\ \Psi \neq \Psi_0}} \left| \sum_{ p \nmid cd }  \frac{ \Psi(p) \Phi\left( p/X\right)\log p}{\sqrt{p}} \right|^2 \\
& \ll \sum_{c > C} \frac{1}{\varphi(c)} \sum_{d \leq \tfrac{2Q}{c}} \frac{1}{\varphi(d)} \varphi(d) Q^{\varepsilon} \ll \frac{Q^{1 + \varepsilon}}{C}.
\end{align*}
The second sum in \eqref{eqn:lemSUE} is also bounded by
$$ \log^2 X \sum_{c > C} \frac{1}{\varphi(c)} \sum_{d \leq \tfrac{2Q}{c}} \frac{1}{\varphi(d)} \varphi(d) \ll \frac{Q^{1 + \varepsilon}}{C}$$
in a similar way, whence the lemma follows.
\end{proof}
\subsection{Evaluating $S_L$}\label{sec:SL}
Recall that
$$ L(p,r) = \sum_{c \leq C} \mu(c) \sum_{\substack{d | p - r \\ (cd, pr) = 1 }} \frac{W(cd/Q)}{\varphi(cd)} \varphi(d)$$
for primes $p$ and $r$. For distinct primes $p$ and $ r$, the condition  $d|p-r$ implies $ ( d,pr) = 1$. So we can erase the condition $(d,pr)=1$, getting
$$ L(p,r) = \sum_{ \substack{ c \leq C \\ (c, pr) = 1 }} \mu(c) \sum_{ d | p - r } \frac{W(cd/Q)}{\varphi(cd)} \varphi(d).$$
Using the identity
$$ \frac{\varphi(d)}{\varphi(cd)} = \frac{1}{\varphi(c)}\prod_{p | (d,c)} \left( 1 - \frac{1}{p}\right) = \frac{1}{\varphi(c)} \sum_{a|c, a | d} \frac{\mu(a)}{a},$$
we have
\begin{align*}
 L(p,r)  = \sum_{\substack{ c \leq C \\ (c, pr) = 1 } } \frac{\mu(c)}{\varphi(c)} \sum_{d | p - r  } W \left( \frac{cd}{Q} \right) \sum_{a|c, a | d} \frac{\mu(a)}{a}
  = \sum_{\substack{ c \leq C, a|c \\ (c, pr) = 1 } } \frac{\mu(a) \mu(c)}{a\varphi(c)} \sum_{ad | p - r  } W \left( \frac{acd}{Q} \right).
\end{align*}
Letting $ ade = |p-r|$, we change the sum over $d$ to the sum over $ e$ as follows
$$ L(p,r) = \sum_{\substack{ c \leq C, a|c \\ (c, pr) = 1 } } \frac{\mu(a) \mu(c)}{a\varphi(c)} \sum_{ae | p - r  } W \left( \frac{c|p-r| }{Qe} \right) . $$
Now we can replace the condition $ae| p-r $ by a character sum modulo $ae$, getting
\begin{align*}
 L(p,r) = \sum_{\substack{ c \leq C, a|c  \\ (c, pr) = 1 } } \frac{\mu(a) \mu(c)}{a\varphi(c)} \sum_e W \left( \frac{c|p-r| }{Qe} \right)  \frac{1}{ \varphi(ae) } \sum_{\Psi \shortmod{ae} } \Psi (p) \overline{\Psi}(r) .
\end{align*}
Similarly to the sum $U(p,r)$, we split the sum $L(p,r)$ into two parts $ L_0 (p,r)$ and $L_E (p,r) $, where $L_0(p,r)$ is the sum coming from the principal character
$\Psi = \Psi_0$ and $L_E(p,r)$ is the sum coming from the remaining
non-principal characters.

We compute the contribution from $L_0(p,r).$ Define
$$ S_{L_0} := \sum_{\substack{p, r \\ p \neq r}} a_p a_rL_0(p,r),$$
where $a_p = \frac{\log p } { \sqrt{p}} \Phi( p/X)$. By Mellin inversion, we get
\begin{align*}
S_{L_0} &= \sum_{\substack{p, r \\ p \neq r}} a_pa_r \sum_{\substack{a, c, e \\ a|c, c \leq C \\ (ce, pr) = 1}} \frac{\mu(a)}{a \varphi(ae)} \frac{\mu(c)}{\varphi(c)} W \left( \frac{c|p-r|}{eQ}\right) \\
&= \frac{1}{2\pi i} \int_{(-\varepsilon)} \hW(s) \sum_{\substack{p, r \\ p \neq r}} a_pa_r \sum_{\substack{a, c, e \\ a|c, c \leq C \\ (ce, pr) = 1}} \frac{\mu(a)}{a \varphi(ae)} \frac{\mu(c)}{\varphi(c)}  \left(\frac{c|p-r|}{eQ}\right)^{-s} \> ds.
\end{align*}
In order to separate the sums of $p$ and $r$, we need the following Mellin
transform
$$
|p - r|^{-s} = \frac{1}{2\pi i} \int_{(\delta)} \frac{\Gamma(1-s)\Gamma(z)}{\Gamma(1 -s + z)} \big ( p^{z - s} r^{-z} + r^{z - s} p^{-z} \big )
d z
$$
for $ p \neq r $, $ \delta > 0$ and $ \Rep(s) < 0$.
Note that the (absolute) convergence of the $z$ integral is ensured by the fact
that the product of Gamma factors decays like $|z|^{-1 + \Rep (s)}$.
Using the above identity we have
\begin{align*}
S_{L_0} &=  \frac{2}{(2\pi i)^2}\int_{(-\varepsilon)}\int_{(\delta)} \hW(s) Q^s\frac{\Gamma(1-s)\Gamma(z)}{\Gamma(1-s+z)}\\
& \qquad \cdot  \sum_{\substack{p, r \\ p \neq r}} \frac{a_p}{p^{s -z}} \frac{a_r}{r^z}   \sum_{\substack{c \leq C \\ (c, pr) = 1}} \frac{\mu(c)}{c^s\varphi(c)} \sum_{ a | c } \frac{\mu(a)}{a} \sum_{(e, pr) = 1} \frac{e^s}{\varphi(ae)} \> dz \> ds
\end{align*}
for $ \varepsilon >0$. Note that the sums over $a, c, p$ and $ r $ have only finitely many terms, so that there are no issues of convergence. The sum over $e$ is
$$ \sum_{(e, pr) = 1} \frac{e^{s}}{\varphi(ae)} = \frac{1}{\varphi(a)} \zeta(1-s) K(-s) B_{-s}(pr) R_{-s}(a) R_{-s}(pr)  $$
by Lemma \ref{lem:sumVarphid}, where the functions $K$, $B_s$ and $R_s$ are defined in Lemmas \ref{lem:primeSL} and \ref{lem:sumVarphid}. The sum over $a$ is
\begin{align*}
\sum_{  a | c } \frac{\mu(a)R_{-s}(a)}{a \varphi(a)} = \prod_{\ell | c} \left( 1 - \frac{R_{-s}(\ell)}{\ell(\ell - 1)}\right).
\end{align*}
Hence, we deduce
\begin{align*}
S_{L_0} & = \frac{2}{(2\pi i)^2}\int_{(-2\varepsilon)} \int_{(\varepsilon)}\hW(s) Q^s\zeta(1-s) K(-s) \frac{\Gamma(1-s)\Gamma(z)}{\Gamma(1-s+z)}  \\
& \qquad \cdot \sum_{\substack{p, r \\ p \neq r}} \frac{a_p}{p^{s -z}} \frac{a_r}{r^z} B_{-s}(pr) R_{-s}(pr)\sum_{\substack{c \leq C \\ (c, pr) = 1}} \frac{\mu(c)}{c^s\varphi(c)} \prod_{\ell | c} \left( 1 - \frac{R_{-s}(\ell)}{\ell(\ell - 1)}\right)  \> dz \> ds.
\end{align*}
We can remove the condition $(pr, c) = 1$ with an additional error $ O( C^{2 \varepsilon}Q^{- 2 \varepsilon} X^{1/2+3\varepsilon})$. The double sum over primes $p$ and $r$ is
\begin{align*}
\sum_{  p \neq r } & \frac{a_p}{p^{s -z}} \frac{a_r}{r^z} B_{-s}(p) B_{-s}(r) R_{-s}(p)R_{-s}(r) \\
 =&  \sum_{ p } \frac{a_p}{p^{s -z}} B_{-s}(p)  R_{-s}(p)  \cdot  \sum_r \frac{a_r}{r^z}  B_{-s}(r)  R_{-s}(r)   - \sum_p   \frac{a_p^2 }{p^s}  B_{-s}(p )^2 R_{-s}(p )^2 \\
 = & \left(\hP ( \tfrac12 -s+z ) X^{ 1/2-s+z} + O( X^{4 \varepsilon} \log ( 2+|s-z|) ) \right) \\
  & \times \left( \hP ( \tfrac12 - z ) X^{ 1/2- z} + O( X^\varepsilon  \log ( 2+|z|) ) \right) + O( X^\varepsilon )
\end{align*}
by Lemma \ref{lem:primeSL}. The error terms only contribute $O( Q^{-2 \varepsilon} C^{2 \varepsilon} X^{ 1/2 +4 \varepsilon}  )$ to $S_{L_0}$, so that
\begin{align*}
S_{L_0}   = &   \frac{2}{(2\pi i)^2}\int_{(-2\varepsilon)} \int_{(\varepsilon)}\hW(s) Q^s\zeta(1-s) K(-s) \frac{\Gamma(1-s)\Gamma(z)}{\Gamma(1-s+z)} \hP ( \tfrac12 -s+z )\hP ( \tfrac12 - z ) X^{1-s}   \\
 & \quad \cdot  \sum_{c \leq C} \frac{\mu(c)}{c^s\varphi(c)} \prod_{\ell | c} \left( 1 - \frac{R_{-s}(\ell)}{\ell(\ell - 1)}\right)
 \> dz \> ds  + O( Q^{-2 \varepsilon} C^{2 \varepsilon} X^{ 1/2 +4 \varepsilon}  ).
\end{align*}

To evaluate the integral, we consider two cases.
\\

{\bf Case 1:} $ X = Q^{\alpha}$, where $1 < \alpha < 2.$ In this case, we shift the contour of $s$ to $\Rep(s) = 1+ \varepsilon$ and get
\begin{align*}
S_{L_0} & = -( \mathrm{ Residue ~at~}  s=0  ) - ( \mathrm{ Residue ~at~}  s=1  ) \\
& \quad + \frac{2}{(2\pi i)^2}\int_{(1+\varepsilon )} \int_{(\varepsilon)}\hW(s) Q^s\zeta(1-s) K(-s) \frac{\Gamma(1-s)\Gamma(z)}{\Gamma(1-s+z)} \hP ( \tfrac12 -s+z )\hP ( \tfrac12 - z ) X^{1-s} \\
 & \qquad \cdot  \sum_{c \leq C} \frac{\mu(c)}{c^s\varphi(c)} \prod_{\ell | c} \left( 1 - \frac{R_{-s}(\ell)}{\ell(\ell - 1)}\right)
 \> dz \> ds  + O( Q^{-2 \varepsilon} C^{2 \varepsilon} X^{ 1/2 +4 \varepsilon}  )\\
&= -( \mathrm{ Residue ~at~}  s=0  ) - ( \mathrm{ Residue ~at~}  s=1  )  + O( Q^{1+\varepsilon} X^{ - \varepsilon}+ Q^{-2 \varepsilon} C^{2 \varepsilon} X^{ 1/2 +4 \varepsilon}  ).
\end{align*}
Three functions in the integrand have poles at $s=0 $ or $ s=1$. $\zeta(1-s)$ has a simple pole at $0$, $\Gamma(1-s)$ has a simple pole at $ s=1$ and $K(-s)$ has a simple pole at $s=1$, since
$$ K(-s) = \prod_{\ell} \left(1 + \frac{1}{(\ell -1)\ell^{1 -s}}\right) = \zeta(2-s) \prod_{\ell} \left(1 + \frac{1}{(\ell -1)\ell^{2 -s}} - \frac{1}{(\ell -1)\ell^{3-2s}}\right).$$
Hence, the residue at the simple pole $s=0$ is
\begin{align*}
     &  - \frac{1}{\pi i } \int_{(\varepsilon)}\hW(0 )   K(0) \frac{ \Gamma(z)}{\Gamma(1 +z)} \hP ( \tfrac12  +z )\hP ( \tfrac12 - z ) X
      \sum_{c \leq C} \frac{\mu(c)}{ \varphi(c)} \prod_{\ell | c} \left( 1 - \frac{R_{0}(\ell)}{\ell(\ell - 1)}\right)     dz\\
& =  - \frac{1}{\pi i }   \int_{(\varepsilon)}  \frac1z     \hP ( \tfrac12  +z )\hP ( \tfrac12 - z )     dz \cdot       \hW(0 )   K(0)    X
      \sum_{c \leq C} \frac{\mu(c)R_0 (c) }{ \varphi(c)}   \\
& = -  \hP  ( \tfrac 12  )  ^2        \hW(0 )   K(0)    X
      \sum_{c \leq C} \frac{\mu(c)R_0 (c) }{ \varphi(c)}   .
\end{align*}
 The residue at the double pole $s=1$ is
\begin{align*}
  & \frac{1}{ \pi  i }  \int_{(\varepsilon)} \hP ( -\tfrac12 +z )\hP ( \tfrac12 - z )      dz \cdot
   \zeta(0) \hW(1 ) Q ( \log \frac{Q}{X} +O(1))       \sum_{c \leq C} \frac{\mu(c)}{c \varphi(c)} \prod_{\ell | c} \left( 1 - \frac{R_{-1}(\ell)}{\ell(\ell - 1)}\right)     \\
 & = -\frac{1}{ 2\pi  i }  \int_{(1/2)} \hP ( -\tfrac12 +z )\hP ( \tfrac12 - z )      dz \cdot
   A_0  \hW(1 ) Q  \log \frac{Q}{X} + O(Q) \\
   &= -\frac{1}{ 2\pi    }  \int_{-\infty}^\infty  | \hP ( it  ) |^2    dt   \cdot A_0  \hW(1 ) Q \log \frac{Q}{X}  + O(Q)     ,
\end{align*}
where
\begin{align*}
A_0  = \sum_{c  } \frac{\mu(c)}{c \varphi(c)} \prod_{\ell | c} \left( 1 - \frac{R_{-1}(\ell)}{\ell(\ell - 1)}\right)   = \prod_p \left( 1- \frac{1}{p^2} - \frac{ 1}{ p^3} \right).
\end{align*}
 By \eqref{def:valueA} and the above, the residue at $s=1 $ is
$$ -\frac{1}{ 2\pi    }  \int_{-\infty}^\infty  | \hP ( it  ) |^2    dt\cdot
   A Q ( \log \frac{Q}{X} +O(1)) .$$
Combining all, we get
\begin{multline} \label{eqn:1}
 S_{L_0} =  \hP  ( \tfrac 12  ) ^2        \hW(0 )   K(0)    X        \sum_{c \leq C} \frac{\mu(c)R_0 (c) }{ \varphi(c)}
  + \frac{1}{ 2 \pi    }  \int_{-\infty}^\infty  | \hP ( it  ) |^2    dt\cdot
   A Q  \log \frac{Q}{X}    \\
    +O(Q + Q^{- 2 \epsilon} C^{2 \epsilon} X^{1/2 + 4 \epsilon} )
\end{multline}
for $ 1  < \alpha < 2 $ with $ X= Q^\alpha$. Note that the first term  in $S_{L_0}$ is cancelled with the main term of $ S_{U_0}$ in Equation (\ref{eqn:sumU}).
\\

{\bf Case 2:} $X = Q^{\alpha},$ where $0 \leq \alpha \leq 1.$ In this case, we shift the contour of $s$ to $\Rep(s) = \varepsilon$ and get
\begin{align*}
S_{L_0} & = -( \mathrm{ Residue ~at~}  s=0  )  \\
& \quad + \frac{2}{(2\pi i)^2}\int_{(\varepsilon )} \int_{(\varepsilon)}\hW(s) Q^s\zeta(1-s) K(-s) \frac{\Gamma(1-s)\Gamma(z)}{\Gamma(1-s+z)} \hP ( \tfrac12 -s+z )\hP ( \tfrac12 - z ) X^{1-s} \\
 & \qquad \cdot  \sum_{c \leq C} \frac{\mu(c)}{c^s\varphi(c)} \prod_{\ell | c} \left( 1 - \frac{R_{-s}(\ell)}{\ell(\ell - 1)}\right)
 \> dz \> ds  + O( Q^{-2 \varepsilon} C^{2 \varepsilon} X^{ 1/2 +4 \varepsilon}  )\\
&= -( \mathrm{ Residue ~at~}  s=0  )  + O( Q^{\varepsilon} X^{1 - \varepsilon}+ Q^{-2 \varepsilon} C^{2 \varepsilon} X^{ 1/2 +4 \varepsilon}  ).
\end{align*}
Since $0 \leq \alpha \leq 1,$ we obtain that
$$ Q^{\varepsilon} X^{1 - \varepsilon} = Q^{\alpha + (1 - \alpha) \varepsilon} \ll Q.$$
By the same argument as in Case 1, we get that
$$ S_{L_0} = \hP  ( \tfrac 12  )  ^2   \hW(0 )   K(0) X\sum_{c \leq C} \frac{\mu(c)R_0 (c) }{ \varphi(c)} + O( Q +  Q^{-2 \varepsilon} C^{2 \varepsilon} X^{ 1/2 +4 \varepsilon}  ),$$
and the first term is cancelled with the main term of $S_{U_0}.$
\\

The contribution from $L_E(p,r)$ is small as seen by the following lemma.
\begin{lemma} We have
\begin{equation} \label{eqn:2}
 S_{L_E} := \sum_{\substack{p, r \\ p \neq r}} \frac{\log p \log r \  \Phi (p/X) \Phi ( r/X) }{\sqrt{pr}}  L_E(p,r) \ll \frac{X^{1+ \varepsilon}C^{1 + \varepsilon}}{Q} .
\end{equation}
\end{lemma}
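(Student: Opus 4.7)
The plan is to adapt the Mellin--Barnes decoupling already used for $S_{L_0}$, but now carrying the non--principal character sum through the computation. First I would Mellin invert $W$ on the vertical contour $\Rep(s) = -A$ for some $A$ slightly larger than $1$, and then use the identity
$$|p-r|^{-s} = \frac{1}{2\pi i}\int_{(\delta)} \frac{\Gamma(1-s)\Gamma(z)}{\Gamma(1-s+z)}\bigl(p^{z-s}r^{-z} + r^{z-s}p^{-z}\bigr)\, dz \qquad (\Rep(s)<0,\ \delta>0)$$
to split $p$ from $r$. Interchanging sums and integrals (justified by absolute convergence coming from $\hat W(s) \ll_N (1+|s|)^{-N}$ and the standard decay of the Gamma ratio in $|z|$) converts $S_{L_E}$ into a double contour integral whose integrand contains the factor
$$\sum_{\substack{c \leq C \\ a\mid c}} \frac{\mu(a)\mu(c)}{a\,\varphi(c)\,c^{s}} \sum_e \frac{e^{s}}{\varphi(ae)} \sum_{\substack{\Psi \shortmod{ae} \\ \Psi \neq \Psi_0}} T_1(\Psi;s,z)\, T_2(\overline\Psi;z),$$
with $T_1(\Psi;s,z) := \sum_p a_p \Psi(p)/p^{s-z}$ and $T_2(\overline\Psi;z) := \sum_r a_r \overline\Psi(r)/r^{z}$, in the same spirit as the $S_{L_0}$ computation.

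The key analytic input is a straightforward variant of Lemma~\ref{lem:sumnonprincChar}: Mellin invert $\Phi$ and shift contours, using that under GRH $L'/L(\sigma+it,\Psi)$ is bounded by $(\log(ae(|t|+2)))^{O(1)}$ for $\sigma \geq \tfrac12+\varepsilon$ and non--principal $\Psi$. On our contours ($\Rep(s)=-A$, $\Rep(z)=\delta$ small positive) this yields the pointwise estimates
$$|T_1(\Psi;s,z)| \ll X^{A+\delta+\varepsilon} Q^{\varepsilon}, \qquad |T_2(\overline\Psi;z)| \ll X^{\varepsilon} Q^{\varepsilon},$$
uniformly, with any polynomial dependence on $|s|+|z|$ absorbed by the rapid decay of $\hat W$ and the Gamma ratio.

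To finish, bound the character sum crudely by $\sum_\Psi |T_1 T_2| \leq \varphi(ae)\max_\Psi |T_1T_2|$, observe that
$$\sum_e \frac{e^{-A}}{\varphi(ae)} \cdot \varphi(ae) = \sum_e e^{-A} = \zeta(A) < \infty$$
precisely because $A>1$, and estimate $\sum_{c \leq C}\sum_{a\mid c} c^{A}/(a\,\varphi(c)) \ll C^{A+\varepsilon}$ by standard divisor--sum arguments. Combined with the factor $Q^{-A}$ from $|Q^s|$ and the absolutely convergent $(s,z)$ contour integrals, taking $A = 1+\eta$ with $\eta$ just above $\varepsilon$ produces
$$|S_{L_E}| \ll \frac{C^{1+\eta+\varepsilon}\, X^{1+\eta+\delta+\varepsilon}}{Q^{1+\eta-\varepsilon}} \ll \frac{X^{1+\varepsilon'} C^{1+\varepsilon'}}{Q}$$
after renaming $\varepsilon'$ to absorb the small exponents.

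The main obstacle is that the weight $W(c|p-r|/(eQ))$ entangles $p$ and $r$, so the clean factorization available for $S_{U_E}$ is unavailable. Decoupling via the Barnes identity forces a shift onto $\Rep(s)<0$, but the pointwise bound on $T_1$ then deteriorates like $X^{|\Rep(s)|}$; the sum over $e$ converges only once $|\Rep(s)|>1$, which is exactly where $X^{|\Rep(s)|}$ combines with the $Q^{-|\Rep(s)|}$ from $|Q^s|$ to produce the required shape $X/Q$. This balancing is essentially tight and is the delicate point of the argument.
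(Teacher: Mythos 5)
Your argument is correct and arrives at the right bound, but it takes a genuinely different route from the paper at a key tactical point. Both proofs open the weight $W$ via Mellin inversion and then decouple $p$ from $r$ with the Barnes integral for $|p-r|^{-s}$, and both invoke GRH (as in Lemma~\ref{lem:sumnonprincChar}) to get cancellation in the resulting prime sums twisted by a non-principal character modulo $ae$. The divergence is in how the sum over $e$ is tamed. The paper keeps the $s$-contour at $\Rep(s)=-\varepsilon$ (so the crude prime-sum bounds are only of size $X^{O(\varepsilon)}$) and, before inverting $W$, records that the support condition $W\bigl(ac|p-r|/(Qe)\bigr)\neq 0$ forces $e\le ac|p-r|/Q \ll acX/Q$; this truncation is carried inside the $s$-integral and makes the $e$-sum finite, contributing roughly a factor $acX/Q$ which combines with $|Q^s|\approx 1$ and the $c$-sum to give $X C^{1+\varepsilon}/Q$. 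You instead drop the truncation and push the $s$-contour all the way to $\Rep(s)=-A$ with $A$ slightly larger than $1$, so that $\sum_e e^{-A}(ae)^{\varepsilon}$ converges unconditionally; the price is that the prime-sum bounds degrade to $X^{A+O(\varepsilon)}$, but this is exactly offset by the $Q^{-A}$ coming from $|Q^s|$ because $X<Q^{2}$. Your balance $X^{A}Q^{-A}$ with $A=1+\eta$ is indeed tight, as you note, whereas the paper's truncation avoids the need to cross $\Rep(s)=-1$ at all. One small point of care: the GRH bound on $T_{1},T_{2}$ carries a conductor factor $(ae)^{\varepsilon}$ (not $Q^{\varepsilon}$ as you wrote), and since your $e$ is unbounded this must be kept; fortunately $\sum_e e^{-A}(ae)^{\varepsilon}$ still converges once $\eta>\varepsilon$, so the fix is to choose $\varepsilon$, then $\eta$, then $\delta$ in that order so that $\eta$ sits strictly between the $\varepsilon$'s and the target exponent. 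With that adjustment your argument delivers the stated estimate.
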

\begin{proof} Let $a_p$ be defined as in (\ref{def:ap}).  We have that
\begin{align*}
S_{L_E} &= \sum_{\substack{a, c, e \\ ac \leq C} }  \frac{\mu(a)\mu(ac)}{a\varphi(ac) \varphi(ae)} \sum_{\substack{\Psi \shortmod{ae} \\ \Psi \neq \Psi_0} }  \sum_{\substack{p, r \\ p \neq r,  (c, pr) = 1} } a_p a_r \Psi(p) \overline{\Psi}(r) W \left(\frac{ac |p - r|}{Qe}\right).
\end{align*}
Since $W$ is supported in $(1, 2),$ $\frac{ac|p-r|}{Qe} \geq 1$ and $e \leq \frac{ac|p-r|}{Q} \leq \frac{acX}{Q}.$
  Proceeding similarly to $S_{L_0}$ we obtain
\begin{align*}
S_{L_E}  & = \frac{1}{2 \pi i }  \int_{( -  \varepsilon)}\hW ( s)  Q^s  \sum_{\substack{a, c, e \\ ac \leq C , e \leq acX/ Q } }  \frac{\mu(a)\mu(ac)e^s }{a^{1+s} c^s \varphi(ac) \varphi(ae)} \sum_{\substack{\Psi \shortmod{ae} \\ \Psi \neq \Psi_0} }  \sum_{\substack{p, r \\ p \neq r \\ (c, p r ) = 1} } a_p a_r \Psi(p) \overline{\Psi}(r)   |p - r|^{-s} ds
\\
&= \frac{2 }{(2 \pi i)^2  }  \int_{( -  \varepsilon)}    \int_{( \varepsilon )}    \hW ( s)  Q^s \frac{\Gamma(1-s)\Gamma(z)}{\Gamma(1 -s + z)}   \sum_{\substack{a, c, e \\ ac \leq C , e \leq acX/ Q } }  \frac{\mu(a)\mu(ac)e^s }{a^{1+s} c^s \varphi(ac) \varphi(ae)} \times
\\ & \quad \sum_{\substack{\Psi \shortmod{ae} \\ \Psi \neq \Psi_0} }  \sum_{\substack{p, r \\ p \neq r \\ (c, pr ) = 1\\} } a_p a_r \Psi(p) \overline{\Psi}(r)  p^{z - s} r^{-z}        dz  ds  .
\end{align*}
The double sum over $p$ and $r$ is
\begin{align*}
&  \sum_{\substack{p, r \\ p \neq r \\ (c, pr ) = 1\\} } a_p a_r \Psi(p) \overline{\Psi}(r)  p^{z - s} r^{-z}     \\
&=  \sum_{\substack{p \\ (c,p) = 1}} \frac{\log p  \Psi(p) \Phi (p/X)  }{p^{1/2+s-z}}   \sum_{\substack{r \\ (c,r) =1}} \frac{ \log r     \overline{\Psi}(r) \Phi (r/X) } { r^{1/2+z}}    -  \sum_{\substack{p \\ (aec, p ) = 1\\} }  \frac{ (\log p)^2  \Phi^2 (p/X) }{p^{1+s}}
\end{align*}
and  bounded by $X^\varepsilon$ assuming GRH. The lemma easily follows from this bound.
\end{proof}

\subsection{Conclusion of the proof of Proposition \ref{prop:1}}

In the beginning of Section 3, we have shown that the sum $S$ splits into
$$
S = S_D + S_N
$$
with $S_D$ the diagonal terms and $S_N$ the off-diagonal terms.
In Section 3.1 we have shown that the diagonal terms $S_D$ contribute
$$
S_D \sim \frac{A}{2\pi} Q \log X \int_{-\infty}^{\infty} |\hP
(i x)|^2 d x
$$
In Sections 3.2--3.4 we have shown that $S_N = S_U+S_L$  is at most $O(Q)$ if $X = Q^{\alpha}$ with $0 \leq \alpha \leq 1$ and that
if $X = Q^{\alpha}$ with $1 < \alpha < 2$ then
$S_N$ is
$$
S_N \sim \frac{A}{2 \pi} Q \log  ( Q / X  ) \int_{-\infty}^{\infty}
|\hP (i x)|^2 dx,
$$
by (\ref{eqn:sumU}),
(\ref{eqn:1}) and
(\ref{eqn:2}), and choosing $C = Q^{\varepsilon}$. Combining the above estimates we conclude that
$$
S \sim  f(\alpha) \tfrac{A}{2\pi} Q \log Q \int_{-\infty}^{\infty} |\hP (ix)|^2 d x = f(\alpha) N_\Phi (Q)
$$
for $ 0 \leq \alpha < 2$, where $f(\alpha)$ is defined in \eqref{eqn:f alpha}. This gives the desired estimate.

\section{Proof of Theorem \ref{thm:1}}

Recall that
$$
F_{\Phi}(Q^{\alpha} ; W) :=  \frac{1}{N_\Phi (Q)} \sum_{q} \frac{W(q/Q)}{\varphi(q)}
\sumstar_{\chi \shortmod{q}} \bigg | \sum_{\gamma_\chi}
\hP (i \gamma_\chi) X^{i \gamma_\chi} \bigg |^{2}.
$$
Since $W$ is supported in $(1,2)$, there is no primitive character in the sum over $\chi$. Then by the Cauchy-Schwarz inequality, we have
\begin{align*}
  F_\Phi (Q^\alpha ; W) = M_1 + M_2 + O( \sqrt{M_1 M_2 } ) ,
\end{align*}
where
$$ M_1  := \frac{1}{N_\Phi (Q)} \sum_{q} \frac{W(q/Q)}{\varphi(q)}
\sumstar_{\chi \shortmod{q}} \bigg |  \sum_n \frac{ \Lambda(n) \chi(n)  \Phi(n/X) }{\sqrt{n}}   \bigg |^{2}, $$
and
$$ M_2  := \frac{1}{N_\Phi (Q)} \sum_{q} \frac{W(q/Q)}{\varphi(q)}
\sumstar_{\chi \shortmod{q}} \bigg | \Phi \left(X^{-1}\right) \log \frac{q}{ \pi} \bigg |^{2} .$$
By Proposition \ref{prop:1}, $ M_1 \sim f(\alpha) $ for $ |\alpha | \leq 2- \epsilon$. Also by a partial summation and (\ref{equation2}) we have
\begin{align*}
  M_2 \sim & \ \frac{1}{N_\Phi (Q)} \sum_{q} \frac{W(q/Q)}{\varphi(q)} \varphi^{*}(q) \Phi (X^{-1})^2 \log^2 Q \\
    \sim & \ \Phi(Q^{-|\alpha|} )^2 \log Q \left( \frac{1}{ 2 \pi }  \int_{-\infty}^\infty \left| \hP ( ix ) \right|^2 dx    \right)^{-1}.
\end{align*}
Therefore, we have
\begin{align*}
  F_\Phi (Q^\alpha; W) =& \ (1+o(1)) \left(  f(\alpha) + \Phi(Q^{-|\alpha|} )^2 \log Q \left( \frac{1}{ 2 \pi }  \int_{-\infty}^\infty \left| \hP ( ix ) \right|^2 dx    \right)^{-1}  \right) \\
  & + O(  \Phi( Q^{- |\alpha|} )  \sqrt{  f(\alpha) \log Q } ).
\end{align*}

\section{Proof of Theorem \ref{cor:1}}\label{proof of theorem 1}

We reproduce here the argument from \"{O}zl\"{u}k's paper \cite{Ozluk}.
First we need a lemma.

\begin{lemma} Assume GRH. If $1 < \alpha < 2$ is fixed, and the function $\Phi$ satisfies $ \Phi( x) = \Phi( x^{-1})$, then
\begin{multline*}
\frac{1}{N_\Phi (Q)} \sum_{q} \frac{W(q/Q)}{\varphi(q)} \sumstar_{\chi \shortmod{q}} \sum_{\gamma_\chi,\gamma'_\chi}
\bigg ( \frac{\sin (\alpha / 2(\gamma_\chi - \gamma'_\chi) \log Q)}{\alpha / 2
(\gamma_\chi - \gamma'_\chi) \log Q} \bigg )^{2} \hP
(i \gamma_\chi) \hP (i\gamma'_\chi )
\sim \bigg (1 + \frac{1}{3\alpha^2} \bigg )  .
\end{multline*}
\end{lemma}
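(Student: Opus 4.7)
My plan is to interpret the squared Fej\'er-type kernel on the left as the Fourier transform of a tent function, which turns the left-hand side into a weighted integral of $F_\Phi(Q^\beta; W)$ over $\beta\in[-\alpha,\alpha]$. Invoking Theorem \ref{thm:1} and two elementary integrations then yields the stated constant.

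First, I would use the standard identity
\[
\left(\frac{\sin Ty}{Ty}\right)^{\!2} = \frac{1}{2T}\int_{-2T}^{2T}\left(1-\frac{|u|}{2T}\right) e^{iuy}\,du
\]
with $T=(\alpha/2)\log Q$, and substitute $u=\beta\log Q$ to obtain
\[
\left(\frac{\sin((\alpha/2)(\gamma-\gamma')\log Q)}{(\alpha/2)(\gamma-\gamma')\log Q}\right)^{\!2} = \frac{1}{\alpha}\int_{-\alpha}^{\alpha}\left(1-\frac{|\beta|}{\alpha}\right) Q^{i\beta(\gamma-\gamma')}\,d\beta.
\]
The hypothesis $\Phi(x)=\Phi(x^{-1})$ forces $\hP(ix)=\hP(-ix)$, which lets me factorise
\[
\sum_{\gamma_\chi,\gamma'_\chi}\hP(i\gamma_\chi)\hP(i\gamma'_\chi)Q^{i\beta(\gamma_\chi-\gamma'_\chi)} = \bigg|\sum_{\gamma_\chi}\hP(i\gamma_\chi)Q^{i\beta\gamma_\chi}\bigg|^{2}.
\]
Interchanging the $\beta$-integral with the sums over $q$, $\chi$, and the zeros, the left-hand side of the lemma becomes
\[
\frac{1}{\alpha}\int_{-\alpha}^{\alpha}\left(1-\frac{|\beta|}{\alpha}\right)F_\Phi(Q^{\beta}; W)\,d\beta.
\]

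Now I would insert the asymptotic of Theorem \ref{thm:1}, which is uniform in $|\beta|\leq\alpha<2$. Splitting at $|\beta|=1$ gives the elementary evaluation
\[
\frac{1}{\alpha}\int_{-\alpha}^{\alpha}\left(1-\frac{|\beta|}{\alpha}\right)f(\beta)\,d\beta = 1-\frac{1}{\alpha}+\frac{1}{3\alpha^{2}}.
\]
For the second main term of Theorem \ref{thm:1}, the compact support of $\Phi$ restricts $\Phi(Q^{-|\beta|})$ to a window $|\beta|=O(1/\log Q)$, on which $1-|\beta|/\alpha = 1+o(1)$. Changing variable $u=|\beta|\log Q$ and applying the Mellin--Plancherel identity
\[
\int_{0}^{\infty}\Phi(x)^{2}\,\frac{dx}{x} = \frac{1}{2\pi}\int_{-\infty}^{\infty}|\hP(ix)|^{2}\,dx
\]
(together with the symmetry of $\Phi$) produces
\[
\int_{-\alpha}^{\alpha}\Phi(Q^{-|\beta|})^{2}\,d\beta = \frac{1}{\log Q}\cdot\frac{1}{2\pi}\int_{-\infty}^{\infty}|\hP(ix)|^{2}\,dx \cdot (1+o(1)),
\]
so this term contributes exactly $1/\alpha$ to the integral. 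Summing, $(1-\tfrac{1}{\alpha}+\tfrac{1}{3\alpha^{2}})+\tfrac{1}{\alpha} = 1+\tfrac{1}{3\alpha^{2}}$, matching the claim. The Theorem \ref{thm:1} error $O(\Phi(Q^{-|\beta|})\sqrt{f(\beta)\log Q})$ lives on the same $O(1/\log Q)$ window where $f(\beta)=O(1/\log Q)$, so it integrates to $O(1/\sqrt{\log Q})=o(1)$.

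The main obstacle is accounting correctly for the $\Phi(Q^{-|\beta|})^{2}\log Q$ contribution: it is supported on a shrinking $\beta$-interval of width $O(1/\log Q)$, yet after the factor $\log Q$ it yields a nontrivial amount of $1/\alpha$, precisely what is needed to promote $1-1/\alpha+1/(3\alpha^{2})$ to the clean expression $1+1/(3\alpha^{2})$. Verifying this with the right constant via the Mellin--Plancherel identity is the delicate step; the rest is Fourier bookkeeping.
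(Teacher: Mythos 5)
Your proof is correct and follows essentially the same route as the paper: both express the squared sinc kernel as the Fourier transform of a tent function, turning the left-hand side into $\int F_\Phi(Q^\beta;W)\tilde r(\beta)\,d\beta$, then substitute Theorem \ref{thm:1} and evaluate the two main-term integrals (obtaining $1-\tfrac{1}{\alpha}+\tfrac{1}{3\alpha^2}$ from $f$ and $\tfrac{1}{\alpha}$ from the $\Phi(Q^{-|\beta|})^2\log Q$ term via Mellin--Plancherel). You are slightly more explicit than the paper in deriving the Fej\'er identity and in bounding the error term $O(\Phi(Q^{-|\beta|})\sqrt{f(\beta)\log Q})$, but the underlying argument is identical.
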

\begin{proof}
We follow the argument given in \cite{Montgomery}. Let
$$
r(u) = \bigg ( \frac{\sin \pi \alpha u}{\pi \alpha u} \bigg )^2,
$$
and we use the identity
\begin{multline} \label{plugin}
\frac{1}{N_\Phi (Q)} \sum_{q} \frac{W(q/Q)}{\varphi(q)}
\sumstar_{\chi \shortmod{q}} \sum_{\gamma_\chi, \gamma'_\chi}
r \bigg ( \frac{(\gamma_\chi - \gamma'_\chi) \log Q}{2\pi} \bigg )
\hP (i \gamma_\chi)  \hP (i \gamma'_\chi) =
\int_{-\infty}^{\infty} F_{\Phi}(Q^\beta ; W) \tilde{r}(\beta) d \beta
\end{multline}
where $\tilde{r}(\beta)$ is the  Fourier  transform of $r$
defined as $ \tilde{r}(\beta) = \int_{-\infty}^{\infty} r(t) e^{-2\pi i \beta t} dt$. In this case
$$
\tilde{r}(\beta) = \begin{cases}
 (\alpha - |\beta|)/\alpha^2  & \text{ if } |\beta| < \alpha , \\
0 & \text{ otherwise}.
\end{cases}
$$
We plug in $F_{\Phi}(Q^{\beta};W)$ from Theorem \ref{thm:1} to the right-hand side of
(\ref{plugin}), obtaining that the right-hand side of (\ref{plugin}) is
$$
(1 + o(1))   \int_{-\alpha}^{\alpha} \left(  f(\beta) + \Phi(Q^{-|\beta|} )^2 \log Q \left( \frac{1}{ 2 \pi }  \int_{-\infty}^\infty \left| \hP ( ix ) \right|^2 dx    \right)^{-1}  \right)  \tilde{r}(\beta) d\beta.
$$
 For $ 1< \alpha < 2 $, we have
\begin{align*}
\int_{-\alpha}^{\alpha} f(\beta) \tilde{r}(\beta) d\beta & =
\frac{2}{\alpha^2} \int_{0}^{1} \beta \cdot (\alpha - \beta) d\beta
+ \frac{2}{\alpha^2} \int_{1}^{\alpha} (\alpha - \beta) d\beta \\
&  = 1 + \frac{1}{3\alpha^2} - \frac{1}{ \alpha},
\end{align*}
and
\begin{align*}
 \log Q &\left( \frac{1}{ 2 \pi }  \int_{-\infty}^\infty \left| \hP ( ix ) \right|^2 dx    \right)^{-1}  \cdot \int_{-\alpha}^{\alpha}   \Phi(Q^{-|\beta|} )^2  \tilde{r}(\beta) d\beta  \\
  & \sim  \log Q \left( \frac{1}{ 2 \pi }  \int_{-\infty}^\infty \left| \hP ( ix ) \right|^2 dx    \right)^{-1}  \cdot \frac{2}{\alpha^2} \int_0^1  \Phi(Q^{-\beta } )^2  (\alpha - \beta )  d\beta \\
  & \sim \frac{2}{\alpha} \int_0^{\log Q}  \Phi( e^{-u}  )^2   du   \left( \frac{1}{ 2 \pi }  \int_{-\infty}^\infty \left| \hP ( ix ) \right|^2 dx    \right)^{-1} \\
  & \sim \frac{2}{\alpha} \int_0^\infty  \Phi( e^{-u}  )^2   du   \left( \frac{1}{ 2 \pi }  \int_{-\infty}^\infty \left| \hP ( ix ) \right|^2 dx    \right)^{-1} \\
  & = \frac{1}{\alpha}.
\end{align*}
The last equality is obtained by Plancherel's theorem for Mellin transform in the form
$$ \frac{1}{ 2 \pi }  \int_{-\infty}^\infty \left| \hP ( ix ) \right|^2 dx = \int_{-\infty}^\infty  \Phi( e^{-u}  )^2   du =  \int_{-\infty}^\infty  \Phi( e^{-|u|}  )^2   du  $$
and the fact that the function $\Phi$ satisfies $ \Phi( x) = \Phi( x^{-1})$.

\end{proof}

\begin{proof}[Proof of Theorem \ref{cor:1}]
Pick $ \hP (s) = ((e^{s} - e^{-s})/2  s)^2$ so that $ \hP ( i \gamma)  = (\sin \gamma / \gamma)^2$. We need to check that this choice is possible, that
is, that $\Phi$ is real and compactly supported in $(a,b)$ for some $a,b > 0$.
Indeed, by Mellin inversion we have
\begin{align*}
\Phi(x) & =   \frac{1}{2\pi i} \int_{(c)} \bigg ( \frac{e^s - e^{-s}}{2  s} \bigg )^2 \cdot x^{-s} ds\\
 & =  \begin{cases}
    \frac12 - \frac14 \log x  & \text{ for } 1 \leq x \leq e^2  , \\
\frac12+ \frac14 \log x   & \text{ for }  e^{-2} \leq x \leq 1, \\
0 & \text{ otherwise},
\end{cases}
\end{align*}
so it satisfies the required conditions. Note that $\Phi$ satisfies $\Phi(x) = \Phi(x^{-1})$.

Let $m_{\rho}$ be the multiplicity of the zero $\rho = \tfrac 12 + i\gamma$.
We count zeros according to multiplicity. In particular,
$$
\sum_{\gamma_{\chi}} m_{\rho} \hP (i \gamma_{\chi})^2
 = \sum_{\substack{\gamma_\chi,\gamma'_\chi  \\ \gamma_\chi = \gamma'_\chi}}
 \hP (i \gamma_\chi)  \hP (i \gamma'_\chi)
$$
because on both sides a given
zero is counted with weight $m^2_\rho \hP (i\gamma_\chi )^2$. We have
\begin{align*}
\sum_{\substack{\gamma_{\chi} \\ \text{simple}}} \hP (i \gamma_{\chi})^2
& \geq \sum_{\gamma_\chi} (2 - m_\rho)\hP (i \gamma_\chi)^2 \\
& \geq 2 \sum_{\gamma_\chi} \hP (i \gamma_\chi)^2
- \sum_{\gamma_\chi, \gamma'_\chi} \bigg ( \frac{\sin \alpha / 2 (\gamma_\chi
-\gamma_\chi') \log Q}{\alpha/2 (\gamma_\chi - \gamma'_\chi) \log Q} \bigg )^2
\hP(i \gamma_\chi)\hP (i \gamma'_\chi).
\end{align*}
Hence
\begin{align*}
\sum_{q} & \frac{W(q/Q)}{\varphi(q)} \sumstar_{\chi \shortmod{q}}
\sum_{\substack{\gamma_\chi \\ \text{simple}}} \hP( i \gamma_\chi)^2  \\
& \geq 2 \sum_q \frac{W(q/Q)}{\varphi(q)} \sumstar_{\chi \shortmod{q}}
\sum_{\gamma_\chi} \hP(i \gamma_\chi)^2 \\
& - \sum_{q} \frac{W(q/Q)}{\varphi(q)} \sumstar_{\chi \shortmod{q}}
\sum_{\gamma_\chi, \gamma'_\chi} \bigg ( \frac{\sin (\alpha/2 (\gamma_\chi
- \gamma'_\chi) \log Q)}{\alpha / 2 (\gamma_\chi - \gamma'_\chi) \log Q}
\bigg )^2 \hP(i \gamma_\chi) \hP (i \gamma'_\chi).
\end{align*}
We take $\alpha = 2-\delta$ with $\delta > 0$
in the previous lemma and observe that
\begin{multline*}
\sum_{q} \frac{W(q/Q)}{\varphi(q)} \sumstar_{\chi \shortmod{q}} \sum_{\gamma_\chi,\gamma'_\chi}
\bigg ( \frac{\sin (\alpha / 2(\gamma_\chi - \gamma'_\chi) \log Q)}{\alpha / 2
(\gamma_\chi - \gamma'_\chi) \log Q} \bigg )^{2} \hP
(i \gamma_\chi)\hP(i\gamma'_\chi )
\leq \bigg ( \frac{13}{12} + \varepsilon \bigg )
N_{\Phi}(Q)
\end{multline*}
with some $\varepsilon \rightarrow 0$ as $\delta \rightarrow 0^{+}$.
Combining the above two equations and using the fact that
$$
\sum_{q} \frac{W(q/Q)}{\varphi(q)} \sumstar_{\chi \shortmod{q}}
\sum_{\gamma_\chi} \hP(i \gamma_\chi)^2
= N_{\Phi}(Q)
$$
we prove Theorem \ref{cor:1}.
\end{proof}

\section{Discussion of \"{O}zl\"{u}k's result}\label{8688}
In this section we explain why heuristically one
expects \"{O}zl\"{u}k's result to provide a proportion of at most 86\%
simple zeros.
It is reasonable to suppose that as $t \rightarrow \infty$, there exists a $\kappa$ such that
$$
\sum_{q\leq t} \frac{1}{\varphi(q)}   {\sumstar_{\chi \shortmod{q}}} \sum_{  \substack{\gamma_{\chi}\\ \text{simple}} } |\hP(i \gamma_{\chi})|^2 \sim \kappa \frac{t \log t}{2 \pi} \int_{-\infty}^{\infty} |\hP (i x)|^2 dx .
$$
\"{O}zl\"{u}k proves that
\begin{equation*}
  \sum_{q \leq Q} \frac{1}{\varphi(q)} {\sum_{\chi \shortmod{q}}}
\sum_{\substack{\gamma_{\chi}\\ \text{simple}} } |\hP(i \gamma_{\chi})|^2 \gtrsim
 \frac{11}{12}  \frac{Q \log Q}{2 \pi} \int_{-\infty}^{\infty} |\hP(i x)|^2 dx .
\end{equation*}
We re-write the left-hand side as follows
\begin{equation*}
 \sum_{q \leq Q} \frac{1}{\varphi(q)} {\sum_{\chi \shortmod{q}}} \sum_{  \substack{\gamma_{\chi}\\ \text{simple}} } |\hP(i \gamma_{\chi})|^2 \\
  = \sum_{q \leq Q} \frac{1}{\varphi(q)} \sum_{d | q }  {\sumstar_{\chi^* \shortmod{d}}} \sum_{ \substack{\gamma_{\chi^*}\\ \text{simple}} } |\hP(i \gamma_{\chi^*})|^2 ,
\end{equation*}
where $\chi^*$ is the primitive character inducing $ \chi$. Note that the nontrivial zeros of $L(s, \chi)$ and $L(s,\chi^*)$ coincide.
Therefore, we get
\begin{align*}
 \sum_{q \leq Q} \frac{1}{\varphi(q)} {\sum_{\chi \shortmod{q}}}
\sum_{\substack{\gamma_{\chi}\\ \text{simple}} } |\hP(i \gamma_{\chi})|^2   & = \sum_{dq \leq Q} \frac{1}{\varphi(dq)}   {\sumstar_{\chi \shortmod{q}}} \sum_{  \substack{\gamma_{\chi}\\ \text{simple}} } |\hP (i \gamma_{\chi})|^2  \\
  & \leq \sum_{d \leq Q}  \frac{1}{ \varphi(d)} \sum_{q\leq Q/d} \frac{1}{\varphi(q)}   {\sumstar_{\chi \shortmod{q}}} \sum_{  \substack{\gamma_{\chi}\\ \text{simple}} } |\hP(i \gamma_{\chi})|^2  \\
&  \lesssim  \sum_{d \leq Q}  \frac{1}{ \varphi(d)}  \kappa \frac{Q}{d} \log \frac{Q}{d}  \frac{1}{2 \pi} \int_{-\infty}^{\infty} |\hP(i x)|^2 dx   \\
   & \sim \kappa  \sum_{d=1}^\infty \frac{1}{ d \varphi(d)}   \frac{ Q \log Q  }{ 2 \pi} \int_{-\infty}^{\infty} |\hP(i x)|^2 dx  .
\end{align*}
It thus follows that
$$ \kappa \geq \frac{11}{12} \left( \sum_{d=1}^\infty \frac{1}{ d \varphi(d)}  \right)^{-1},$$
or equivalently
$$ \frac{1}{ N'_\Phi (Q)} \sum_{q\leq Q} \frac{1}{\varphi(q)}   {\sumstar_{\chi \shortmod{q}}} \sum_{  \substack{\gamma_{\chi}\\ \text{simple}} } |\hP(i \gamma_{\chi})|^2 \geq   \frac{11}{12} \left( \sum_{d=1}^\infty \frac{1}{ d \varphi(d)}  \right)^{-1} A_0^{-1}, $$
where
\begin{align*}
 N'_\Phi (Q) & := \sum_{q\leq Q} \frac{1}{\varphi(q)}   {\sumstar_{\chi \shortmod{q}}} \sum_{  \gamma_{\chi}  } |\hP(i \gamma_{\chi})|^2 \\
 & \sim \frac{1}{ 2 \pi} \log Q \int_{-\infty}^{\infty} |\hP(i x)|^2 dx \sum_{q \leq Q} \frac{ \varphi^*(q)}{\varphi (q)} \\
 & \sim A_0  \frac{Q\log Q}{ 2 \pi}   \int_{-\infty}^{\infty} |\hP(i x)|^2 dx
 \end{align*}
and
$$ A_0 = \prod_p \left( 1- \frac{1}{p^2} - \frac{1}{p^3} \right). $$
Therefore from \"{O}zl\"{u}k's work we obtain a proportion of
$$  \frac{11}{12} \left( \sum_{d=1}^\infty \frac{1}{ d \varphi(d)}  \right)^{-1} A_0^{-1}   \approx 0.86883781 \ldots .$$

\section{Corollary \ref{colofprop1} and Barban-Davenport-Halberstam Theorem} \label{sec:BDH}
The prime number theorem in arithmetic progression asserts that 
$$ \psi(x; q, a)  := \sum_{\substack{n \leq x \\ n \equiv a \ {\rm mod} \ q}} \Lambda(n) \sim \frac{x}{\phi(q)} $$
 as $x \rightarrow \infty$ for fixed $q$. We are interested in the mean square of the error term in the prime number theorem for arithmetic progressions as $q$ varies and consider 
$$ M(x, Q) = \sum_{q \leq Q} \sum_{\substack{a = 1 \\ (a, q) = 1}}^q \left| \psi(x; q, a)  - \frac{x}{\phi(q)}\right|^2.$$
This was first studied by Barban \cite{Barban}, and independently by Davenport and Halberstam \cite{DavaHalb}. Bounds and asymptotic formulas of $M(x, Q)$ are usually referred to as the Barban-Davenport-Halberstam Theorem. Gallagher \cite{Gal} later refined their results by showing that
$$ M(x, Q) \ll xQ\log x$$
for $x(\log x)^{-A} \leq Q \leq x.$ This estimate is the best possible since Montgomery \cite{M:BDH} and Hooley \cite{Hooley:BDH} showed that 
\begin{equation}\label{MxQ}
M(x, Q) \sim xQ\log x
\end{equation}
 in the same range. Moreover, Montgomery proved \eqref{MxQ} for $x \leq Q$ and Hooley \cite{Hooley:BDHII} obtained \eqref{MxQ} on GRH for $x^{1/2 + \epsilon} \leq Q \leq x$.

The proof of the Barban-Davenport-Halberstam Theorem reduces essentially to finding bounds or asymptotic formulas for
\begin{equation} \label{sum:BDH}
 \sum_{q \leq Q} \frac{1}{\phi(q)} \sum_{\chi \neq \chi_0} \bigg| \sum_{n \leq x} \Lambda(n) \chi(n) \bigg|^2,
\end{equation}
since
$$
 M(x, Q)    \sim  \sum_{q \leq Q}\sum_{\substack{a = 1 \\ (a, q) = 1}}^q \bigg| \psi(x; q, a)  - \frac{\psi(x, \chi_0)}{\phi(q)}\bigg|^2    = \sum_{q \leq Q} \frac{1}{\phi(q)} \sum_{\chi \neq \chi_0} \bigg| \sum_{n \leq x} \Lambda(n) \chi(n) \bigg|^2
$$
using the fact that $\psi(x, \chi_0 ) := \sum_{n \leq x} \Lambda(n)\chi_0 (n) \sim x$ as $ x \to \infty$ for a principal character $ \chi_0$ and the orthogonality relations for characters.

In Corollary \ref{colofprop1},  we obtain on GRH an asymptotic formula for a sum similar to (\ref{sum:BDH}) in the range $Q \geq x^{1/2 + \epsilon}$. Our sum is taken over primitive characters instead of all characters. The main term in the asymptotic formula in Corollary \ref{colofprop1} is (after normalization) the same as the main term in the asymptotic formulas in Montgomery's and Hooley's results.

\section{Acknowledgments}

This work was initiated during the \textit{Arithmetic Statistics} MRC program at Snowbird.
We would like to thank Brian Conrey for his guidance throughout the project and for providing
us with many unpublished materials on the asymptotic large sieve. We would also like to thank Kannan Soundararajan  for pointing out the relation of our work to the Barban-Davenport-Halberstam Theorem. Finally, we would like to thank the referee for many valuable comments.

\end{document}